\documentclass{article}

\pdfoutput=1  



\usepackage{geometry} 

 \geometry{margin=1in,top=1in,bottom=1in}

\usepackage[hyphens]{url}

\usepackage{hyperref}

\usepackage{amsmath}
\usepackage{amsthm,amsfonts,amssymb}  

\usepackage{cleveref,bm,bbm}

\usepackage{graphicx,wrapfig}
\usepackage{algorithmic}

\usepackage{textgreek,upgreek,bm}

\def\email#1{\href{mailto:#1}{#1}}

\graphicspath{{figures/}}


\newtheorem{theorem}{Theorem}[section]
\newtheorem{corollary}[theorem]{Corollary}
\newtheorem{proposition}[theorem]{Proposition}
\newtheorem{lemma}[theorem]{Lemma}

\Crefname{corollary}{Corollary}{Corollaries}
\Crefname{eqnarray}{eq.}{eqs.}
\Crefname{equation}{eq.}{eqs.}

\Crefname{figure}{Figure}{Figures}
\Crefname{tabular}{Tab.}{Tabs.}
\Crefname{table}{Tab.}{Tabs.}
\Crefname{theorem}{Theorem}{Theorems}
\Crefname{definition}{Definition}{Definitions}
\Crefname{section}{Section}{Sections}
\Crefname{subsection}{Section}{Sections}
\Crefname{proposition}{Proposition}{Propositions}
\Crefname{lemma}{Lemma}{Lemmas}
\Crefname{assumption}{Assumption}{Assumptions}
\Crefname{example}{Example}{Examples}

\usepackage{marginnote}

\reversemarginpar





\makeatletter
\newcommand\gobblepars{%
    \@ifnextchar\par%
 {\expandafter\gobblepars\@gobble}%
{}}
\makeatother

\def\wham#1{\smallbreak\pagebreak[3]%
\noindent\textbf{#1}\ \ \gobblepars}


\def\UplambdaN{\Uplambda_{\text{\scriptsize\sf n}}}

\def\fee{\upphi}
\def\tilfee{\widetilde{\upphi}}

 \def\tilU{\widetilde{U}}
 \def\tilX{\widetilde{X}}

 \def\tilclU{\widetilde{\mathcal{U}}}

\def\tilclX{\widetilde{\mathcal{X}}}
\def\tilclY{\widetilde{\mathcal{Y}}}


%


\def\sq{\hbox{\rlap{$\sqcap$}$\sqcup$}}
\def\qed{\ifmmode\sq\else{\unskip\nobreak\hfil
\penalty50\hskip1em\null\nobreak\hfil\sq
\parfillskip=0pt\finalhyphendemerits=0\endgraf}\fi\medskip}

\newcounter{rmnum}
\newenvironment{romannum}{\begin{list}{{\upshape (\roman{rmnum})}}{\usecounter{rmnum}
\setlength{\leftmargin}{12pt}
\setlength{\itemindent}{12pt}
\setlength{\rightmargin}{5pt}
\setlength{\itemsep}{2pt}
}}{\end{list}}

\def\bfmath#1{{\mathchoice{\mbox{\boldmath$#1$}}%
		{\mbox{\boldmath$#1$}}%
		{\mbox{\boldmath$\scriptstyle#1$}}%
		{\mbox{\boldmath$\scriptscriptstyle#1$}}}}

\def\bfmp{p}

\def\bfmy{\bfmath{y}}

\def\bfmD{\bfmath{D}}

\def\bfmS{\bfmath{S}}  
  
\def\bfmU{\bfmath{U}}

\def\bfmX{\bfmath{X}}
\def\bfmY{\bfmath{Y}}

\DeclareBoldMathCommand\bfmH{{H}}
\DeclareBoldMathCommand\bfmM{{M}}
\DeclareBoldMathCommand\bfmZ{{Z}}

\DeclareBoldMathCommand\bfmclM{{\cal M}}
\DeclareBoldMathCommand\bfmell{\ell}

\DeclareBoldMathCommand\bflambda{\lambda}
\DeclareBoldMathCommand\bfnu{\nu}

\DeclareBoldMathCommand\bfbeta{\beta}
\DeclareBoldMathCommand\bfkappa{\kappa}
\DeclareBoldMathCommand\bfzeta{\zeta}

\DeclareBoldMathCommand\bfPhi{\Phi}
\DeclareBoldMathCommand\bfPsi{\Psi}

\def\bfmp{p}

\DeclareBoldMathCommand\bfmr{r}
\DeclareBoldMathCommand\bfmy{y}


\def\U{{\sf U}}
\def\S{{\sf S}}

\def\state{{\sf X}}

\def\st{s}

\def\Prob{{\sf P}}

\def\Expect{{\sf E}}

\def\transpose{{\hbox{\it\tiny T}}}

\def\clG{\mathcal{G}}

\def\clN{\mathcal{N}}
\def\clS{\mathcal{S}}
\def\clT{\mathcal{T}}
\def\clU{\mathcal{U}}

\def\outpt{\mathcal{Y}}
\def\haoutpt{\widehat{\mathcal{Y}}}
\def\baroutpt{\overline{\mathcal{Y}}}

\def\barh{\bar{h}}
\def\barH{\bar{H}}

\def\Tg{G}

\def\outpt{\text{\footnotesize$\mathcal{Y}$}}
\def\haoutpt{\widehat{\text{\footnotesize$\mathcal{Y}$}}} 
\def\baroutpt{\overline{\text{\footnotesize$\mathcal{Y}$}}} 

\def\epsy{\varepsilon}

\def\varble{\,\cdot\,}
\def\eqdef{\mathbin{:=}}
\newcommand{\field}[1]{\mathbb{#1}}

\def\Re{\field{R}}

\def\ind{\field{I}}

\newcommand{\lgl}{\langle\,}
\newcommand{\rgl}{\,\rangle}

\newcommand{\clC}{\mathcal{C}} 
\newcommand{\clD}{\mathcal{D}} 
\newcommand{\clF}{\mathcal{F}} 
\newcommand{\clH}{\mathcal{H}} 
\newcommand{\clK}{\mathcal{K}} 
\newcommand{\clL}{\mathcal{L}}

\newcommand{\clR}{\mathcal{R}}

\newcommand{\clDV}{D_\infty}

\def\DVrate{\clR}

\renewcommand{\L}{\Lambda}


\def\limsup{\mathop{\rm lim{\,}sup}}

\def\argmin{\mathop{\rm arg{\,}min}}
\def\argmax{\mathop{\rm arg{\,}max}}

\def\vx{\vec{x}}
\def\vX{\vec{X}}

\def\har{\hat{r}}


\def\tilnu{\widetilde{\nu}}

\def\chlambda{\check{\lambda}}
\def\spmf{\hat{\nu}}
\def\pwr{\varrho}


\usepackage{enumitem}
\setlist[enumerate]{leftmargin=.5in}
\setlist[itemize]{leftmargin=.5in}



\title{Kullback-Leibler-Quadratic Optimal Control\thanks{The authors acknowledge support  from the National Science Foundation grant EPCN 1935389, and French National Research Agency grant ANR-22-PETA-0044.}}

\author{Neil Cammardella\thanks{Department of Electrical and Computer Engineering, University of Florida, USA (\email{ncammardella@ufl.edu})}
	\and 
	Ana Bu\v si\'c\thanks{Inria and DI ENS, \'Ecole Normale	Sup\'erieure, CNRS, PSL Research University, Paris, France (\email{ana.busic@inria.fr}, \url{https://www.di.ens.fr/\textasciitilde busic/})}	
	\and 
	Sean Meyn\thanks{Department of Electrical and Computer Engineering, University of Florida, and Inria International Chair, Paris (\email{meyn@ece.ufl.edu}, \url{http://www.meyn.ece.ufl.edu/})}
}

\usepackage{amsopn}

\begin{document}

\maketitle

\begin{abstract}
This paper presents approaches to mean-field control, motivated by distributed control of multi-agent systems.      Control solutions are based on a convex optimization problem, whose domain is a convex set of  probability mass functions (pmfs).   The main contributions follow:

\wham{1.}   
 Kullback-Leibler-Quadratic (KLQ) optimal control  is a special case,
  in which the objective function is composed of a control cost in the form of Kullback-Leibler divergence between a candidate pmf and the nominal, 
 plus a quadratic cost on the sequence of marginals.  Theory in this paper extends prior work on deterministic control systems, establishing that the optimal solution is  an exponential tilting of the nominal pmf.    Transform techniques are introduced to reduce complexity of the KLQ solution,   motivated by the need to consider time horizons that are much longer than the inter-sampling times required for reliable control. 
 
\wham{2.}  
Infinite-horizon KLQ  leads to a state feedback control solution with attractive properties. 
It can be expressed  as
	either
 state feedback, in which the state is the sequence of marginal pmfs, 
 	or an open loop solution is obtained that is more easily computed.

\wham{3.}    Numerical experiments are surveyed in an application of distributed control of residential loads to provide grid services, similar to utility-scale battery storage.   
The results show that KLQ optimal control enables the aggregate power consumption of a collection of flexible loads to track a time-varying reference signal, while simultaneously ensuring each individual load satisfies its own quality of service constraints.	

\wham{Keywords:} Mean field games,
distributed control, Markov decision processes, Demand Dispatch.
AMS:  90C40,  
	93E20,  	
	90C46	
   93E35,  	
   60J20,  	

\end{abstract}

\section{Introduction} \label{s:intro}

The goal of this paper is to obtain control solutions for mean-field models.  The optimization problems considered  are generalizations of standard  Markov Decision Process (MDP) objectives, in  both finite-horizon and average-cost settings.

\subsection{Mean field control} 
\label{s:intro_dc}

The mean-field control problem is an approach to distributed control of a collection of $\clN$   homogeneous ``agents'',
with $\clN\gg 1$,  modeled as discrete-time stochastic systems, with state processes at time $k$ denoted  $\{ X^i_k : 1\le i\le \clN \}$.  To avoid a long detour on notation it is assumed that the common state space $\state$ is finite.  
\begin{subequations}
For a single value $k$ and time horizon $K\ge 1 $, the  empirical distributions are denoted
\begin{align}
p^{\, \clN} (\vx)  &= \frac{1}{\clN} \sum_{i=1}^\clN \ind\{ (X_0^i ,\dots, X_K^i) = \vx\}    &&  \vx\in\state^{K+1}
\\
\nu^{\, \clN}_k(x)  &= \frac{1}{\clN} \sum_{i=1}^\clN \ind\{ X_k^i = x\}      \,,  &&    x\in\state \, ,
\label{e:empDist}
\end{align} 
where $\vx = (x_0,\dots,x_K)$ denotes an arbitrary element of $\state^{K+1}$.
 The set of pmfs on $\state^{K+1}$ is denoted by $\clS(\state^{K+1})$ for $K\ge 1$,    and $\clS(\state)$ for $K=0$.  
 
 \end{subequations}

The integer $\clN$ is regarded as a parameter in mean-field theory,  and assumptions imply that there is convergence as $\clN\to\infty$,
\[
\lim_{\clN\to\infty} p^{\, \clN}(\vx)   = p(\vx)\,,  \qquad  
\lim_{\clN\to\infty} \nu^{\, \clN}_k(x_k)   = \nu_k(x)   \,,     
\]
where $\nu_k\in \clS(\state)$ is the $k$th marginal of $p\in \clS(\state^{K+1})$ for $0\le k\le K$.    

In this paper this limit is achieved by assuming homogeneity of the statistics of each agent:
for  each $i$ the state evolution is consistent with $p$:
\begin{equation}
\Prob\{ X_{k+1}^i = x_{k+1} \mid (X_0^i ,\dots, X_k^i) =  \vx_0^{\, k}  \}    =   p( x_{k+1} \mid  \vx_0^{\, k} )
\label{e:GenMFMevolution}
\end{equation}
where the conditional pmfs are obtained from Bayes rule.

The paper concerns design of $p$ to  balance two objectives,  based on    a reference signal  $\{r_k\}$, and function $\outpt\colon\state\to\Re$:
\begin{romannum}
\item    $\nu_k\sim \nu_k^0$,  where $\{\nu_k^0\}$ models \textit{nominal behavior}.
\item  $\displaystyle \langle \nu_k,\outpt\rangle  \eqdef  \sum_{x\in\state} \nu_k(x)  \outpt(x)   \approx r_k$.
\end{romannum}

The agents considered in \Cref{s:num} represent a population of residential water heaters, and  $\outpt\colon \state \to \Re_+$ is chosen so that  $\langle \nu^{\, \clN}_k ,\outpt\rangle $ is the average power consumption over the population of loads.    

Two approaches to design are developed in this paper.   

\wham{Feedforward control:}  
A sequence $\{\clC_k : 1\le k\le K \}$ of real-valued cost functions  on the marginals is specified, and $p^*$ is obtained as the solution to     
\begin{equation}
J^\star(\nu_0^0) = \min_p \sum_{k=1}^K \clC_k(\nu_k)   
\label{e:GenMFC}
\end{equation}
where the minimum is over all pmfs with first marginal $\nu_0^0$. 
The two goals motivate the following objective function,  
\begin{equation}
\clC_k(\nu)  =  
\clD(\nu, \nu_k^0) + \frac{\kappa}{2} \big[\lgl \nu, \outpt \rgl - r_k\big]^2  \,, \qquad\nu\in \clS(\state) \,,
\label{e:Cost2}
\end{equation}
in which  $\kappa>0$ is a penalty parameter, and $\clD$ penalizes deviation from nominal behavior. The finite-horizon optimal control problem is thus
\begin{equation}
J^\star(\nu_0^0) = \min_p \sum_{k=1}^K \biggl[ \clD(\nu_k,\nu_k^0) + \frac{\kappa}{2} \big[\lgl \nu_k, \outpt  \rgl - r_k\big]^2  \biggr]
\label{e:DCMFG}
\end{equation}

It is envision that this finite horizon optimal control problem will be a component of a  model predictive control (MPC) strategy,  with time horizons for computation updates dictated by performance requirements and model accuracy.       
 
\wham{Feedback control:}    If the nominal model is Markovian, then the evolution of the marginals follow the dynamics of a controlled nonlinear state space model,
\begin{equation}
 \nu_{k+1} = f_k(\nu_k,\fee_k) \,,  \qquad k\ge 0\,,  \ \ \nu_0^0 \ \textit{given}
\label{e:GlobalPerspective}
\end{equation}
where  $\{ \fee_k \}$ is the input sequence, evolving on an abstract set  $\Upphi$.      A feedback policy takes the form $\fee_k = \clK_k(\nu_k)$.

Design choices for $\clK_k$ are proposed based on an infinite-horizon  solution of \eqref{e:DCMFG}.     Justification requires further assumptions, including time-homogenous dynamics for \eqref{e:GlobalPerspective},   which holds if the nominal model is a time-homogeneous Markov chain.

\subsection{MDPs and mean-field control}

The Markovian assumption for the nominal model is based on the standard controlled Markov chain model used in MDPs.

The model considered here is specified by a state space denoted $\S$, input space $\U$, and we denote $\state\eqdef \S\times\U$ (assumed finite).    
The  joint state-input process is denoted $\bfmX =\{X_k = (S_k, U_k) :  k\ge 0\}$.     
In finite-horizon optimal control the model includes a sequence of controlled transition matrices $\{ T_k  : k\ge 0 \}  $  and cost functions  $\{ c_k  : k\ge 0 \}  $,  with   $c_k\colon\state\to\Re$ for each $k$.

The dynamics of $\bfmX = (\bfmS,\bfmU) = \{S_k, U_k : k\ge 0\}$  are determined by the transition matrices as follows.
It is assumed that $\bfmX$ is adapted to a filtration $\{\clF_k : k\ge 0\}$  (so that $X_k$ is $\clF_k$-measurable for each $k$), and 
\begin{equation}
\Prob\{S_{k+1} = \st' \mid  \clF_k ; \ S_k = \st \,,  \  U_k = u \}  = T_k(x, \st')  \,,\qquad  x=(\st,u) \in\state\,, \st'\in\S  
\label{e:Pu}
\end{equation}

The set of   functions from $\S$ to the simplex  $\clS(\U)$ is denoted $\Upphi$,  and we let $\fee$ denote a generic element of $\Upphi$.
The decision rule defining the input sequence is  assumed to be Markovian:      
\begin{equation}
	\Prob\{U_k = u \mid \clF_{k-1} ;  \ S_k= \st \} = \fee_k(u\mid \st) \,,\qquad x=(\st,u) \in\state
	\label{e:U_MFC}
\end{equation}
with   $\fee_k\in\Upphi$ for each $k$.

The finite-horizon optimal control problem of MDP theory is  a special case of \eqref{e:GenMFC}, in which  $\clC_k$ linear for each $k$;
in this case $\clC_k(\nu_k) = \langle \nu_k, c_k \rangle = \sum_{x\in\state} \nu_k(x) c_k(x)$  for each $k$, 
 and the sum on the right hand side of \eqref{e:GenMFC} may be expressed
\[
 \sum_{k=1}^K 
 \langle \nu_k, c_k \rangle = 
 \sum_{k=1}^K   \Expect[ c_k(X_k)]  \,,  \qquad     X_k  \sim \nu_k   \,,
\] 
where   $\bfmX$ evolves according to the controlled Markovian dynamics.
This interpretation is the first step in the linear programming (LP) approach to MDPs introduced by Manne \cite{bor02a,man60a}.
The second step is to recognize that the dynamics can be expressed as a sequence of linear constraints on the marginals,
\begin{equation}
\sum_{u'} \nu_k(\st',u') = \sum_{\st,u} \nu_{k-1}(\st, u) T_{k-1}(x, \st') \,, \quad \st'\in\S \, , \ 1 \le k \le K\,, \quad \textit{   $\nu_0^0$  given.
}
\label{e:Manne}
\end{equation}

Another special case of \eqref{e:GenMFC} is variance-penalized optimal control, for which  
$
 \clC_k(\nu_k) =  \langle \nu_k,  c \rangle + \kappa \bigl[ \langle \nu_k,  c^2 \rangle - \langle \nu_k,  c \rangle ^2  \bigr]
$, 
with $\kappa>0$ a penalty parameter.  The solution to the optimization problem \eqref{e:GenMFC}   can be expressed using a randomized state feedback policy of the form \eqref{e:U_MFC} 
   \cite{alt99,put14,CSRL}.

\subsection{Kullback-Leibler-Quadratic control}
\label{s:KLQ}
 
In this approach to feedforward control we choose      a Markovian model of the form (\ref{e:Pu},\ref{e:U_MFC}) to define nominal behavior:  
\begin{subequations}
for a collection $\{\fee_k^0\}\subset\Upphi$,
\begin{align}
\bfmp^0(\vx)  &= \nu_0^0(x_0) P^0_0(x_0,x_1) P^0_1(x_1,x_2) \cdots P^0_{K-1}(x_{K-1},x_K)
\label{e:nom_mod}
\\
P^0_k(x,x')  &=  T_k(x, \st')\fee_{k+1}^0(u'\mid \st') \,,\qquad x ,x'\in\state
\label{e:nomP}
\end{align}
\end{subequations}

Any other  $\{\fee_k\}\subset\Upphi$ defines  a Markov chain $\bfmX$ with transition matrices,
\begin{equation}
	P_k(x,x') \eqdef \Prob\{X_{k+1} = x' \mid X_k=x\} = T_k(x, \st') \fee_{k+1}(u' \mid \st') \,.     
\label{e:Pk}
\end{equation}
The marginals evolve according to linear dynamics, similar to \eqref{e:Manne}:
\begin{equation}
\nu_k = \nu_{k-1} P_{k-1} \,,\qquad 1\le k \le K   
\label{e:mfm}
\end{equation} 
 in which   $\nu_k$ is interpreted as an $n$-dimensional row vector, with $n = | \state |$.

We obtain a convex program by optimizing over $\{\nu_k\}$,  similar to    the LP approach of  \cite{man60a}.  
Scalar variables $\{\gamma_k\}$ are introduced to simplify the objective, in anticipation of a Lagrangian decomposition:
\begin{subequations} 
\label{e:primal}
\begin{align}
J^\star(\nu_0^0) \eqdef & \min_{\nu,\gamma}  \Big[ \sum_{k=1}^K
\clD(\nu_k ,\nu_k^0) + \frac{\kappa}{2} \sum_{k=1}^K \gamma_k^2   \Bigr]
	&
\label{e:j_star}
\\
& \text{s.t.} \ \
\gamma_k = \lgl \nu_k, \outpt  \rgl - r_k \,, \quad   &
\label{e:gamma_constraint}
\\
&\quad \sum_{u'} \nu_k (\st', u')  =  \sum_{\st,u} \nu_{k-1}(\st, u) T_{k-1}(x, \st')  \,, \quad   &1\le k\le  K
\label{e:dynamics_constraint}%
\end{align}%
\label{e:DCMFGb}
\end{subequations}

The \textit{relative entropy rate} is adopted as the cost of deviation:
\begin{equation}
\clD(\nu_k ,\nu_k^0) \eqdef \sum_{\st,u}   \nu_k(\st,u) \log\Bigl(\frac{\fee_k(u\mid \st)}{\fee_k^0(u\mid \st)}\Bigr)
\label{e:KLrate}
\end{equation}
The terminology is justified through the following steps. First, we have seen that any randomized policy gives rise to a pmf $\bfmp \in \clS(\state^{K+1})$ that is Markovian:
$$
\bfmp(\vx) = \nu_0^0(x_0) P_0(x_0,x_1) P_1(x_1,x_2) \cdots P_{K-1}(x_{K-1},x_K)
$$  
The \textit{relative entropy}   (Kullback-Leibler divergence) is the mean log-likelihood:
\begin{equation}
D(\bfmp\| \bfmp^0) = \sum L(\vx)\, \bfmp(\vx) 
\label{e:KL}
\end{equation}
where $L = \log(\bfmp/\bfmp^0)$ is an extended-real-valued function on $\state^{K+1}$.   The expression for $P_k$ in \eqref{e:Pk} and the analogous formula for $P^0_k$ using $\fee_{k+1}^0$ gives
\begin{equation}
\begin{aligned}
L(\vx) = \log\Bigl(\frac{\bfmp(\vx)}{\bfmp^0(\vx)}\Bigr) 
& =
\sum_{k=0}^{K-1}  \log\Bigl(\frac{P_k(x_k,x_{k+1})}{P^0_k(x_k,x_{k+1})}\Bigr)
& = 
\sum_{k=1}^{K} \log\Bigl(\frac{\fee_{k}(u_k\mid \st_k)}{\fee^0_{k}(u_k\mid \st_k)}\Bigr)
\end{aligned} 
\label{e:LLR}
\end{equation}
Consequently, $D(\bfmp\| \bfmp^0) =     \sum_{k=1}^{K}  \clD(\nu_k ,\nu_k^0)$.


The proof of \Cref{p:convexity}  is contained in \Cref{app:cvx}.

\begin{proposition}
\label{p:convexity}
With $\clD$ chosen as the relative entropy rate \eqref{e:KLrate},
the optimization problem \eqref{e:primal} is convex in $\{\nu_k, \gamma_k: 1 \le k \le K\}$. Furthermore, the linear constraints in \eqref{e:dynamics_constraint}  are equivalent to \eqref{e:mfm}.
\qed
\end{proposition}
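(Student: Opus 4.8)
The plan is to treat the two assertions separately, beginning with the equivalence of constraints, since it clarifies how the marginal sequence and the induced policy are related. For the equivalence I would argue both directions by direct computation, using the componentwise form of \eqref{e:mfm} given by \eqref{e:Pk}, namely $\nu_k(\st',u') = \sum_{\st,u}\nu_{k-1}(\st,u)T_u(\st,\st')\phi_k(u'\mid\st')$. For the forward direction (\eqref{e:mfm} $\Rightarrow$ \eqref{e:dynamics_constraint}), I would sum this identity over $u'$ and use $\sum_{u'}\phi_k(u'\mid\st')=1$ to recover the state-marginal identity \eqref{e:dynamics_constraint}. For the converse, the essential observation is that $\phi_k$ is \emph{defined} from $\nu_k$ through \eqref{e:U_MFC}, so $\nu_k(\st',u') = \phi_k(u'\mid\st')\sum_{u''}\nu_k(\st',u'')$ holds by construction; substituting the marginal identity \eqref{e:dynamics_constraint} for $\sum_{u''}\nu_k(\st',u'')$ then reproduces $(\nu_{k-1}P_{k-1})(\st',u')$ exactly, which is \eqref{e:mfm}. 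The only point needing care is the degenerate case $\sum_{u''}\nu_k(\st',u'')=0$, where $\phi_k(\cdot\mid\st')$ is undefined; there both sides vanish, so the identity survives under any convention for $\phi_k$.

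For convexity, the quadratic penalty $\tfrac{\kappa}{2}\sum_k\gamma_k^2$ is manifestly convex, and the constraints \eqref{e:gamma_constraint} and \eqref{e:dynamics_constraint} are affine in the decision variables, so the feasible set (intersected with the probability simplices) is convex. It therefore remains only to show that each map $\nu_k\mapsto\clD(\nu_k,\nu_k^0)$ is convex. The cleanest route is to rewrite the relative entropy rate \eqref{e:KLrate} in terms of the joint pmf: substituting $\phi_k(u\mid\st)=\nu_k(\st,u)/\sum_{u'}\nu_k(\st,u')$ gives
$$
\clD(\nu_k,\nu_k^0) = \sum_{\st,u}\nu_k(\st,u)\log\frac{\nu_k(\st,u)}{\nu_k(\st)\,\phi_k^0(u\mid\st)}\,,\qquad \nu_k(\st)\eqdef\sum_{u'}\nu_k(\st,u')\,.
$$
Each summand has the form $p\log(p/q)$ with $p=\nu_k(\st,u)$ and $q=\nu_k(\st)\phi_k^0(u\mid\st)$, and both $p$ and $q$ are \emph{linear} functions of $\nu_k$, since $\phi_k^0$ is fixed data.

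The main obstacle, and the only substantive step, is establishing that $(p,q)\mapsto p\log(p/q)$ is jointly convex on $\posRe\times\posRe$. I would invoke the standard fact that this map is the perspective of the convex function $s\mapsto s\log s$ (equivalently, verify joint convexity directly from the positive semidefiniteness of its $2\times 2$ Hessian, whose determinant vanishes and whose trace is positive). Granting joint convexity, each summand is a convex function precomposed with the linear map $\nu_k\mapsto(p,q)$, hence convex in $\nu_k$; summing over $(\st,u)$ and over $k$ preserves convexity. Combined with the convex quadratic term and the affine constraints, this shows that \eqref{e:primal} is a convex program, completing the proof.
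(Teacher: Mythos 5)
Your proposal is correct, and on the convexity claim it takes a genuinely different route from the paper. The paper first decomposes $\clD(\nu_k,\nu_k^0)=D(\nu_k\|\nu_k^0)-D(\hat\nu_k\|\hat\nu_k^0)$ --- a difference of convex functions, from which convexity is not immediate --- and then proves convexity by exhibiting the explicit sub-gradient \eqref{e:subgrad-k} at each fixed $\mu$ and verifying the sub-gradient inequality \eqref{e:subgrad_prop-k} through Pythagorean-type identities of the form $D(\nu\|\nu^0)=D(\nu\|\mu)+D(\mu\|\nu^0)+\lgl \nu-\mu,\log(\mu/\nu^0)\rgl$, the inequality coming from the dropped term $\clD(\nu_k,\mu)\ge 0$. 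You instead write each summand of \eqref{e:KLrate} as $p\log(p/q)$ with $p=\nu_k(\st,u)$ and $q=\bigl(\sum_{u'}\nu_k(\st,u')\bigr)\phi_k^0(u\mid \st)$, both linear in $\nu_k$ since $\phi_k^0$ is fixed data, and invoke joint convexity of the perspective of $s\mapsto s\log s$; precomposition with a linear map and summation then finish the argument. Your route is shorter and rests on a single textbook fact, while the paper's route is more laborious but produces the explicit sub-gradient formula (which has the same exponential-tilting form that reappears in the optimal policy) as a by-product. The one point worth stating explicitly in your version is the boundary convention for the extended-real-valued perspective: when $\phi_k^0(u\mid \st)=0$ but $\nu_k(\st,u)>0$ the summand is $+\infty$ (consistent with relative entropy under failure of absolute continuity), and when the state marginal vanishes the summand is zero; neither case disturbs convexity of the extension. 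On the constraint equivalence your computation is essentially the paper's (sum over $u'$ in one direction, multiply by $\phi_k(u'\mid \st')$ in the other), and your explicit treatment of the degenerate case $\sum_{u''}\nu_k(\st',u'')=0$, which the paper passes over silently, is a welcome addition.
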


\subsection{Motivation from linear systems theory}
\label{s:LQRmot}

The approach to feedback control proposed in \Cref{s:IPD-Q} begins with consideration of   the infinite-horizon KLQ problem.   This is tractable only subject to additional assumptions.  

It is assumed that   the nominal model is a time-homogeneous Markov chain, and that the reference signal is \textit{constant},  $r_k\equiv r$,  $k\ge 0$.   On optimizing for each $r\in\Re$ we obtain a continuous family of optimizers,  $\{ \fee^\star_k(u\mid \st;r) :   (\st,u)\in\state\,  ,  k\ge 0 \,  ,  \ r\in\Re\}$.
A potentially useful policy for tracking is then,
\begin{equation}
\fee_k(u\mid \st) = \fee^\star_k(u\mid \st;r_k) \,,\qquad     (\st,u)\in\state\, \  k\ge 0
\label{e:IPD-Qsoln}
\end{equation}
Motivation for this approach may be found in the theory of optimal control for linear systems.   

 Consider the linear system with $n$-dimensional state $\bfmX$, $m$-dimensional input $\bfmU$,  and scalar output $\bfmY$, evolving as
\begin{equation}
X_{k+1} = A X_k  + B U_k   +   N_{k+1}\,,   \quad   Y_k = C^\transpose X_k  +  W_{k+1}
\label{e:LQR}
\end{equation}
in which $\{ N_{k+1}  , W_{k+1} \}$ are i.i.d.,  mutually independent,  with zero mean and finite covariances.  
The cost is quadratic, $c(x,u;r) = (y-r)^2  +  u^\transpose R u$ with $R>0$.

The goal is to solve the average cost optimal control problem.   The solution is obtained via state-augmentation:   define $X^r_k = [X_k;  r_k]$, where $r_{k+1} = r_k=r$ defines the dynamics.    The solution is linear state feedback,
\begin{equation}
U_k =  - K^\star X_k  +  G^\star  r\,,\qquad k\ge 0\, ,
\label{e:LQG+ref}
\end{equation} 
where $[K^\star;G^\star]$ is the optimal  gain.   
The optimal gain   does \textit{not} depend on $r$ or the  distribution  of   $N_k$ or $ W_k$.   

The special case in which the disturbances are \textit{zero} is most closely related to the nonlinear control problem considered in  \Cref{s:IPD-Q}.    Consider the finite horizon  objective
\[
J^\star_K(x)  = \min_{\bfmU} 
\sum_{n=0}^K  c(X_k,U_k)  
		= \min_{u}  \bigl\{  c(x,u;r)   +   J^\star_{K-1}( A x  + B u )  \}\,,\qquad X_0 =x\in\Re^n\,.
\]
It is not useful to let $K\to\infty$ without modification,  since the cost $c(x,u;r)  $ is never zero.   This is why the relative value functions $h_K(x) = J^\star_K(x) -J^\star_K(0) $  are introduced,  which solve the Bellman equation in modified form
\[
\eta_K +
h_K(x)   =   \min_{u}  \bigl\{  c(x,u;r)   +   h_{K-1}( A x  + B u )  \}\,,\qquad X_0 =x\in\Re^n\,,
\]
with $\eta_K = J^\star_K(0) -J^\star_{K-1}(0) $.   As $K\to\infty$,  the pair $(\eta_K ,
h_K)$ converge to a solution to   the average cost optimality equation (ACOE),
\[
\eta^\star +
h^\star(x)   =   \min_{u}  \bigl\{  c(x,u;r)   +   h^\star( A x  + B u )  \}\,,\qquad x\in\Re^n\, ,
\]
whose minimizer is precisely \eqref{e:LQG+ref}.     The proof is standard, though usually presented in the purely stochastic setting.  It is especially simple in this LQR setting since each of the functions $\{h_N\}$ are quadratic   \cite{put14,CSRL}.

When $\bfmr$ is time varying, it is standard practice to apply the ``hack'' 
\begin{equation}
U_k =  - K^\star X_k  +  G^\star  r_k\,,\qquad k\ge 0\,.
\label{e:LQG+ref2}
\end{equation}
The most compelling motivation is found in  the deterministic, continuous time setting: under mild conditions, the Return Difference Equation tells us that the closed loop dynamics from reference input to output are \textit{passive} \cite{andmoo90}.  Passivity is lost for discrete time models, but can be expected to hold approximately when the discrete time model is obtained from sampling a continuous time system.

\subsection{Main results}
\label{s:MainResults}

The contributions of this paper fall into three categories:

\begin{subequations}

\wham{1. Feedforward control}

Consideration of the dual of the convex optimization problem \eqref{e:primal} leads to many insights.  The main conclusions summarized here are a 
 special case of \Cref{t:dualFunctional}:
\begin{theorem}
\label{t:opt1}   \emph{[KLQ solution].}
Consider the convex program \eqref{e:primal}.   An optimizer $\{\fee^\star_k: 1 \le k \le K\}$ exists, is unique, and is of the form:
\begin{align}
\qquad
\fee_k^\star(u\mid \st) 
&=
\fee_k^0(u\mid \st) \exp\bigl(\sum_{\st'} T_k(x, \st')g^\star_{k+1}(\st') + \lambda^\star_k \outpt(\st,u) - g_k^\star(\st) \bigr)\,,
\label{e:phistar1}
\\
   \text{\it where} \ \ 
g_k^\star (\st) &=
\log\Bigl( \sum_u \fee_k^0(u \mid \st) \exp\bigl(\sum_{\st'} T_k(x, \st')g^\star_{k+1}(\st') + \lambda^\star_k \outpt(\st,u)   \bigr) \Bigr)
\label{e:lambda1}
\end{align}
and $\{\lambda^\star_k: 1 \le k \le K\}$, $\{g^\star_k(\st): 1 \le k \le K\}$ are the Lagrange multipliers for the constraints \eqref{e:gamma_constraint} and \eqref{e:dynamics_constraint}, respectively, and $g_{K+1} \equiv 0$.
\end{theorem}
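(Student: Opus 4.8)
The plan is to combine the convexity from \Cref{p:convexity} with Lagrangian duality. Because the objective is convex and proper and the constraints \eqref{e:gamma_constraint}--\eqref{e:dynamics_constraint} are affine, strong duality holds (a relative-interior feasible point is obtained from the nominal policy $\{\phi_k^0\}$ mixed slightly with the uniform policy, guaranteeing full support), so the KKT conditions are both necessary and sufficient for optimality. Existence is then immediate: the feasible set is nonempty (it contains the marginals generated by $\{\phi_k^0\}$), closed and bounded as a subset of a product of simplices, and the objective is lower semicontinuous, so a minimizer exists by Weierstrass. For uniqueness I would argue by induction on $k$. Along any segment joining two optima the objective is affine, and being a sum of convex terms each term is affine along the segment; in particular $\lgl\nu_k,\outpt\rgl$ is constant and each $\clD(\nu_k,\nu_k^0)$ is affine. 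The constraint \eqref{e:dynamics_constraint} pins the marginal $\hat\nu_k(\st')=\sum_{\st,u}\nu_{k-1}(\st,u)T_u(\st,\st')$ as a function of $\nu_{k-1}$ alone, so starting from the fixed $\nu_0^0$ the weight $\hat\nu_1$ is identical for both optima; using the representation $\clD(\nu_1,\nu_1^0)=\sum_\st\hat\nu_1(\st)D(\phi_1(\cdot\mid\st)\|\phi_1^0(\cdot\mid\st))$ from \Cref{p:convexity} together with strict convexity of $\phi\mapsto D(\phi\|\phi_1^0)$, affineness forces $\phi_1=\phi_1'$ wherever $\hat\nu_1(\st)>0$, hence $\nu_1=\nu_1'$; this propagates equality of $\hat\nu_2,\phi_2,\dots$ up the horizon.

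For the form \eqref{e:phistar1}, introduce multipliers $\lambda_k$ for \eqref{e:gamma_constraint} and $g_k(\st')$ for the $|\S|$ equations of \eqref{e:dynamics_constraint} at each level $k$, and write the Lagrangian. Minimization over $\gamma_k$ is elementary and eliminates the quadratic penalty, relating $\lambda_k^*$ to $\lgl\nu_k^*,\outpt\rgl-r_k$. The crux is the minimization over $\nu_k$. Collecting the terms linear in $\nu_k$---the penalty $\lambda_k\lgl\nu_k,\outpt\rgl$, the level-$k$ dynamics term $\sum_{\st'}g_k(\st')\hat\nu_k(\st')$, and the level-$(k{+}1)$ term $-\sum_{\st,u}\nu_k(\st,u)\sum_{\st'}T_u(\st,\st')g_{k+1}(\st')$ (absent when $k=K$, which is precisely the terminal condition $g_{K+1}\equiv 0$)---and using $\clD(\nu_k,\nu_k^0)=\sum_\st\hat\nu_k(\st)D(\phi_k(\cdot\mid\st)\|\phi_k^0(\cdot\mid\st))$, the $\nu_k$-minimization decouples over $\st$ into the classical Gibbs variational problem $\min_{\phi_k(\cdot\mid\st)}\{D(\phi_k(\cdot\mid\st)\|\phi_k^0(\cdot\mid\st))+\lgl\phi_k(\cdot\mid\st),c_k(\st,\varble)\rgl\}$ with $c_k(\st,u)=-\sum_{\st'}T_u(\st,\st')g_{k+1}(\st')-\lambda_k\outpt(\st,u)$ (multiplier signs chosen to match the statement). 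Its unique minimizer is the exponential tilt $\phi_k^*(u\mid\st)\propto\phi_k^0(u\mid\st)\exp(\sum_{\st'}T_u(\st,\st')g_{k+1}^*(\st')+\lambda_k^*\outpt(\st,u))$, i.e.\ \eqref{e:phistar1}.

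It remains to identify the normalizer with the multiplier $g_k^*$. Here I would invoke KKT stationarity directly: by \eqref{e:subgrad-k} with $\mu=\nu_k^*$ (the subgradient is the gradient on the support of $\mu$), the gradient of $\clD(\varble,\nu_k^0)$ at the optimum is $\log(\phi_k^*(u\mid\st)/\phi_k^0(u\mid\st))$, so stationarity gives $\log(\phi_k^*(u\mid\st)/\phi_k^0(u\mid\st))=\sum_{\st'}T_u(\st,\st')g_{k+1}^*(\st')+\lambda_k^*\outpt(\st,u)-g_k^*(\st)$, and imposing $\sum_u\phi_k^*(u\mid\st)=1$ then yields \eqref{e:lambda1}. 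The step I expect to be the main obstacle is making this identification airtight. The global normalization $\sum_{\st,u}\nu_k=1$ is not an independent constraint but is implied by \eqref{e:dynamics_constraint} (summing the level-$k$ equations over $\st'$ and using that each $T_u$ is stochastic), so one must verify that the $|\S|$-vector $g_k^*$ is free to serve the dual role of the $|\S|$ log-partition functions enforcing the per-state normalizations $\sum_u\phi_k^*(u\mid\st)=1$, without introducing a spurious normalization multiplier, and that the induced backward recursion \eqref{e:lambda1}, terminated by $g_{K+1}^*\equiv 0$, is well posed. Once \eqref{e:phistar1}--\eqref{e:lambda1} are shown to define a KKT point, sufficiency of KKT for this convex program, together with the uniqueness argument above, completes the proof.
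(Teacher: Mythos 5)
Your proposal is correct, and its engine is the same one the paper uses: a Lagrangian decomposition of \eqref{e:primal} into per-stage, per-state Gibbs variational problems whose unique minimizers are exponential tiltings of $\phi_k^0$, with the dynamics multiplier $g_k^*$ identified with the log-partition function so that it satisfies the backward recursion \eqref{e:lambda1} with $g^*_{K+1}\equiv 0$, and Slater's condition closing the duality gap. Note, though, that the paper does not prove \Cref{t:opt1} directly: it proves the subspace-relaxed \Cref{t:dualFunctional} (via \Cref{lm:dualkullback}, \Cref{lm:dual_lambda_h}, \Cref{lm:dual_lambda}) and reads off \Cref{t:opt1} as the degenerate case \eqref{e:waveletDegen}. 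Three of your steps genuinely diverge from that route. First, existence: you obtain a primal minimizer by Weierstrass on a compact product of simplices, whereas the paper establishes existence of the dual maximizer $\lambda^*$ through anti-coercivity of $\varphi^*(\lambda)$, which requires the linear growth bound on $g^\lambda$ of \Cref{lm:g_bound}; your route is more elementary, but the formula in the theorem still needs the dual optimum $(\lambda^*,g^*)$ to exist, which your KKT/Slater step supplies. Second, the identification of $g_k^*$ with the normalizer: you use stationarity of the Lagrangian in $\nu_k$ via \eqref{e:subgrad-k} together with $\sum_u\phi_k^*(u\mid\st)=1$; the paper instead keeps the simplex constraint on $\hat\nu_k$ explicit, so the dual contains the term $\min_\st\bigl[g_k(\st)-\clT^\lambda_k(g_{k+1};\st)\bigr]$, and maximizing over $g$ (after normalizing this minimum to zero and using monotonicity of $\clT^\lambda_k$) forces $g_k^\lambda=\clT^\lambda_k(g^\lambda_{k+1})$ --- this disposes of exactly the ``spurious normalization multiplier'' worry you flag, since the additive ambiguity in $g_k$ is quotiented out rather than fought. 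Third, uniqueness: your forward induction (affineness of each strictly convex per-state relative entropy along a segment of optima, with $\hat\nu_k$ pinned by $\nu_{k-1}$ and propagated from the fixed $\nu_0^0$) is sound and is an argument the paper does not spell out at all; the only caveat, shared with the paper, is that $\phi_k^*(\cdot\mid\st)$ is determined by the optimization only on states with $\hat\nu_k^*(\st)>0$, and off the support it is fixed by fiat through \eqref{e:phistar1}.
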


\label{e:KLQsoln}
\end{subequations}

\Cref{p:algo} motivates a two-step approach in which $\lambda^\star$ is obtained as the solution to a convex program that maximizes the dual function $\varphi^\star$,   and then $g^\star$ are computed  through the nonlinear recursion \eqref{e:lambda1}.   Hence the larger computational challenge is computing  $\lambda^\star$.      Expressions for the derivatives of  $\varphi^\star$  involve means and variances of $\outpt(X_k)$, which  invites the application of Monte-Carlo techniques when the state space is large or even uncountable---see   \Cref{s:alg}.

\wham{2. Feedback.}

   \Cref{s:IPD-Q} concerns control design following steps analogous to the approach used  in linear systems theory to obtain the feedback control strategy \eqref{e:LQG+ref2}.  
Justification of the average-cost optimality equation (ACOE) requires that we turn to a time-homogeneous model,  meaning that $T_k =T$ and  $\fee_k^0= \fee^0 $, independent of $k$.     

Even with $r_k\equiv r$ fixed,  the solution to \eqref{e:Cost2}  is not time homogeneous, but  on letting $K\to\infty$ the policies converge to a solution of an ACOE.  This is equivalently expressed as  the solution to a deterministic optimal control problem:
 \begin{equation}
 \text{System:}
 \
 \nu_{k+1} = f(\nu_k,\fee_k) \,,  \quad 
 \text{Cost:} 
 \
  c(\nu,\fee;r)  =  \clDV(\spmf,\fee)     
				+ \frac{\kappa}{2} [  \lgl \nu, \outpt \rgl - r ]^2
\label{e:IPDQ-global}
\end{equation}
where  the marginals   $\{\nu_k\}$  are viewed as a state process, evolving on the simplex $\clS(\state) $, and  $\fee_k\in \Upphi$ is regarded as an input.   
The system equation is of the form \eqref{e:mfm}, but simplified because of the time-homogeneity assumptions imposed here, giving
\[
f(\nu,\fee)\big|_{x'=(\st',u')} =  \sum_{x\in\state} \nu(x) T(x,\st')  \fee(u'\mid \st')
\] 
 Hence $f$ is bilinear in the pair $(\nu,\fee)$.   
  Identification and justification of the term $\clDV$ in \eqref{e:IPDQ-global} requires further notation and analysis.

Consider the infinite horizon objective,
\begin{equation}
\eta^\star( r) = 
 \min  \limsup_{K\to\infty} \frac{1}{K} \sum_{k=1}^K \biggl[ \clD(\nu_k,\nu^0_k) + \frac{\kappa}{2} \big[\lgl \nu_k, \outpt  \rgl - r\big]^2  \biggr]
\label{e:KLQinfty}
\end{equation}
in which the minimum is over all  $\{ \fee_k\}  \subset \Upphi$.    
The  following notational conventions are required to describe the structure of its solution:
\wham{(i)}  Any  $\fee\in\Upphi$ defines a transition matrix $P_\fee$,  and any   pmf $\pi$ that is invariant for $P_\fee$  admits the decomposition
\begin{equation}
\pi(\st,u) = \fee(u\mid \st) \spmf(\st)   
\label{e:pimu}
\end{equation}
where $\spmf$ is the steady-state pmf for $\bfmS$ under this policy.

\wham{(ii)} 
With $\fee$ and $\spmf$ as above, the steady-state  relative entropy rate is denoted
\begin{equation}
\clDV(\spmf,\fee)  \eqdef \sum_{\st,u}    \fee(u\mid \st)  \spmf(\st)  \log\Bigl(\frac{\fee(u\mid \st)}{\fee^0(u\mid \st)}\Bigr)
\label{e:KLrateSS}
\end{equation}

\begin{theorem}
\label{t:opt2}   \emph{[Infinite-horizon KLQ solution].}
Suppose that the nominal transition matrix $P^0$ has unique invariant pmf $\pi^0$, and fix any  $\kappa>0$ and $r\in\Re$.
Then,  there is a solution to \eqref{e:KLQinfty} in which $\fee_k^\star = \fee^\star$ for each $k$, obtained from the optimization problem  
\begin{equation}
\argmin_{\pi,\fee}  \bigl\{  \clDV(\spmf,\fee)     
				+ \frac{\kappa}{2} [  \lgl \pi, \outpt \rgl - r ]^2     :   \pi P_\fee = \pi  \bigr\}
\label{e:IPDQ}
\end{equation}
This   optimization problem 
is convex,  with unique solution $\{ \pi^\star,\fee^\star \} $.
\end{theorem}

The convex program \eqref{e:IPDQ} reduces to the ``IPD'' convex program of \cite{meybarbusyueehr15,busmey16v,chehasmatbusmey18} 
as $\kappa\to\infty$  (see discussion surrounding \eqref{e:IPD} in the literature review).   The two convex programs  
are differentiated by the introduction of a quadratic cost on the marginals, so  the policy $\fee^\star$ obtained from \eqref{e:IPDQ}   is henceforth called the \textit{IPD-Q solution}.
Much of \Cref{s:IPD-Q}   is devoted to obtaining approximations of this solution, as well as computational methods to obtain the exact solution.

\wham{a)   HJB solution and  LQR approximation.}

Viewed as a  deterministic optimal control problem, with system and cost given in  \eqref{e:IPDQ-global},  
another solution to  \eqref{e:KLQinfty} is obtained  as state feedback $\fee_k^\star = \clK^\star (\nu_k^\star, r)$,  for some mapping $\clK^\star \colon \clS(\state)\times\Re \to\Upphi$.    
The IPD-Q solution is obtained via  $\fee^\star = \clK^\star (\nu^\star; r)$,  with $\nu^\star$ the steady-state marginal of $\bfmS$ under $P_{\fee^\star}$.

 Because computation of $\clK^\star$ is complex if $|\state|$ is large, and in anticipation of finer analysis of the performance of this policy,  much of \Cref{s:IPDQ_HJB} is devoted to   ``small signal'' approximations.

Let $\{x^i =(\st^i,u^i) :  1\le i\le n\} $ be an enumeration of the state space $\state$, with $n=|\state|$.  
As a corollary to \Cref{t:IPDQ-LQRa,t:IPDQ-LQR}, coefficients $\{K^\star_{i,j}  , G^\star_i \}$ are constructed for which 
$\fee_k^\star(u^i \mid \st^i,r )  = \fee_k(u^i \mid \st^i,r )  +O(r^2)$ for each $i$,  with
\begin{equation}
\fee_k(u^i \mid \st^i,r ) \eqdef  \fee^0(u^i\mid \st^i) \exp \Bigl(  \frac{1}{\fee^0(u^i\mid \st^i)}  \Bigl(   - \sum_j K^\star_{i,j} \tilnu_k(x^j)   +  G^\star_i  r  \Bigr)      -    \Gamma ( \st^i,r ) \Bigr)  
\label{e:SmallSignalPolicy}
\end{equation}
$\tilnu_k(x^i) \eqdef \nu_k(x^i)   -\pi^0(x^i) $ for each $i$, and
 $\Gamma$ is a normalizing constant, defined so that $\fee_k(\varble \mid \st^i,r )$ is a pmf on $\U$ for each $\st^i,r$.

\wham{b)   Lagrangian relaxation.}
A Lagrangian relaxation   leads to a characterization of the IPD-Q solution in terms of a standard ACOE.   Similar to \eqref{e:gamma_constraint},  we introduce the variable $\gamma =
\lgl \nu_k, \outpt  \rgl - r$,   and let $\lambda^\star\in\Re$ denote the Lagrange multiplier associated with this scalar constraint;   it  is identified in \cref{e:rCS}
 as 
$\lambda^\star = \kappa [r - \langle \pi^\star, \outpt \rangle ]$.

The relative value function $h^\star$ that solves the ACOE provides a representation of the IPD-Q solution in \eqref{e:phistarIPDQ}:
\[
\fee^\star(u\mid \st) 
=
\fee^0(u\mid \st) \exp\bigl( \barh^\star (\st,u)  + \lambda^\star \outpt(\st,u)- \Gamma^\star(\st) \bigr)\,,
\]
with $\displaystyle \barh^\star(x) = \sum_{u'} T(x , \st')  h^\star(\st')$ and $\Gamma^\star(\st) $ a normalizing factor.

\wham{c)   ODE solution and small signal approximation.}

Rather than compute $ \lambda^\star$ for each $r$, it is argued that it is simpler to let $\lambda$ be the independent variable.    The family of relative value functions $\{ h^\lambda : \lambda\in\Re \}$ solve an ordinary differential equation, whose vector field is identified in \eqref{e:ODEsoln}.    In addition to offering a tool for exact computation, 
this leads to approximation of the IPD-Q solution.

\smallskip

These conclusions lead to several approaches to feedback control for tracking a time varying reference signal.   Remember, in the following 3 options, the family $\{\fee_k : k\ge 0\}$ is proposed for local decision making in a mean-field control architecture.   

\wham{1.}  The feedback solution \eqref{e:IPD-Qsoln}  using the collection  $\{ \fee^\star_k(\cdot \mid \cdot\, ;r) :    k\ge 0 \,  ,  \ r\in\Re\}$.

\wham{2.}   The open-loop strategy $\fee_k(\cdot \mid \cdot) =  \fee^\star(\cdot \mid \cdot\, ;r_k)$,   with
$\{\fee^\star(\cdot \mid \cdot\, ; r) :  r\in\Re\} $ the IPD-Q solutions.

\wham{3.}   In option 2 above,  it is assumed that $r_k$ is made available to each agent, at each time $k$, as an external control signal.   
 A refinement is obtained by designing a control signal $\{\zeta_k: k\ge 0\}$ based on filtering measurements, such as error feedback,
 \begin{equation}
\zeta_k = \sum_{i=0}^k g_{k-i}  e_i\,,  \quad k\ge 0\,, \qquad e_i = r_i -\langle \nu_i, \outpt \rangle
\label{e:ErrorFeedback}
\end{equation}
The randomized decision rule for each agent is then  $ \fee_k(\cdot \mid \cdot)= \fee^\star(\cdot \mid \cdot\, ;\zeta_k)$,  with
$\{\fee^\star(\cdot \mid \cdot\, ; \zeta) :  \zeta\in\Re\} $ the IPD-Q solutions.

The linearized dynamics described in \Cref{t:IPDQ-LQRb}  can aid in the design of the filter in \eqref{e:ErrorFeedback}.

\wham{3. Application to Demand Dispatch.}

The original motivation for the research surveyed here is application to distributed control of power systems.  The term  \textit{Demand Dispatch} was introduced in the conceptual article \cite{brolureispiwei10} to describe the possibility of distributed intelligence in electric loads, designed so that the population would help provide supply-demand balance in the power grid.

 The numerical results surveyed in \Cref{s:num} illustrate the application of KLQ to control a large population of residential loads.   As expected, tracking error  can be made arbitrarily small with large $\kappa>0$, provided the reference signal is feasible.

 It is found in numerical experiments that the histograms defining the state of the mean-field model rapidly ``forget'' their initial conditions. For example, \Cref{f:hist-evo} shows the evolution of the histograms over time from six different degenerate initial conditions; within a few hours, they become nearly identical. If this phenomenon holds under general conditions, then it has important implications for control design. Further discussion is contained in \Cref{s:con_arc}.

\begin{figure}[h]

\centering

\includegraphics[width=.9\textwidth]{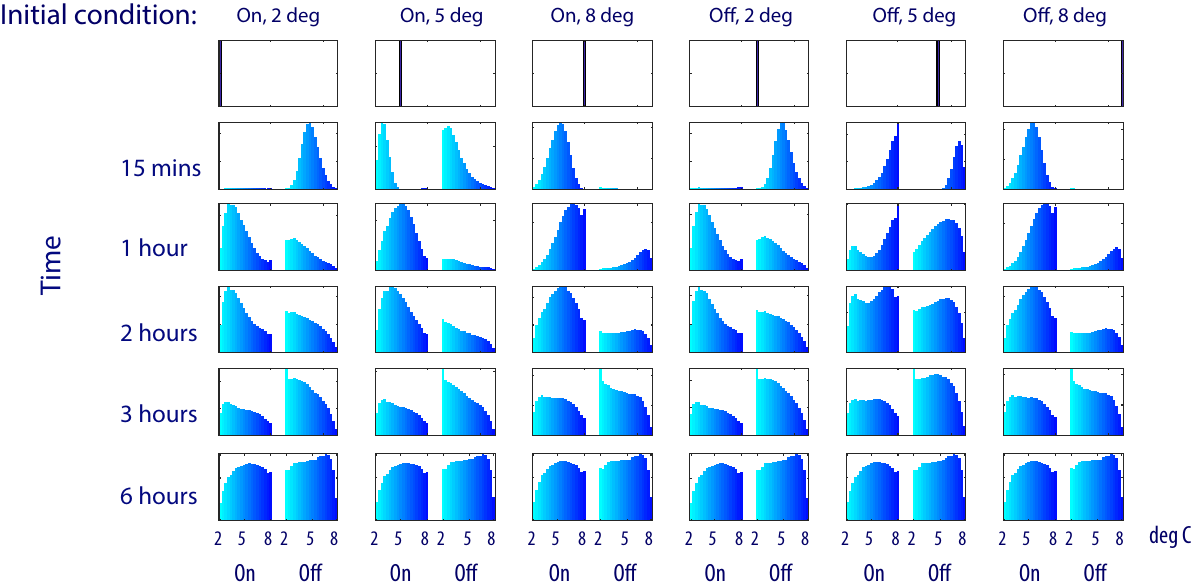}

\caption{\small
	Evolution of the marginals $\{ \nu_k^\star\}$ of individual agents with $\kappa=150$, from six different initial conditions.
	The histograms nearly coincide after about three hours.}
\label{f:hist-evo} 
\vspace{-.1cm}
\end{figure}

%
%

\subsection{Literature review}
\paragraph{Mean field control}

The optimization problem \eqref{e:GenMFC}
 is inspired by mean-field game theory \cite{lasry07mean,huacaimal07,huang06large,cai15,guelaslio11,yinmeymeysha12} (see \cite{cardel18a,cardel18b,cai21,tagmeh23} for recent surveys).

Mean-field control differs from mean-field game theory only because of greater control at the microscopic layer: we do not assume that an individual in the population is free to optimize based on its local objective function, so we avoid the fragility of Nash equilibria.   
This description is similar to \textit{ensemble control} in physics  (see \cite{li10} for history), and   many in the power systems area opt for this term rather than mean-field control (see \cite{cheche17b,chehasmatbusmey18} and their references).

\paragraph{Demand Dispatch}
  The goal of Demand Dispatch is to modify the behavior of loads so that their aggregate power consumption tracks a reference signal $\{r_k\}$  that is synthesized by a \textit{balancing authority} (BA).    Randomized control techniques have been proposed in \cite{matkoccal13, tintrostr15, meybarbusyueehr15,almfrohin17, cheche17b,bencolmal19} based on various control architectures.

The following control strategy is common to the approaches described in \cite{meybarbusyueehr15,chehasmatbusmey18}. It is assumed that a family of transition matrices $\{P_\zeta :\zeta \in \Re\}$ is available at each load. A sequence $\{\zeta_0, \zeta_1, \dots\}$ is broadcast from the BA, based on measurements of the grid, and at time $k$ the $i$th load transitions according to this law:
$$
\Prob\{X^i_{k+1} = x' \mid X^i_k =x,\ \zeta_k =\zeta \} = P_\zeta(x,x')
$$
The feedback solution \eqref{e:ErrorFeedback} was proposed in \cite{meybarbusyueehr15},  and tested in this and later research using  
$ e_i = r_i -\langle \nu^{\, \clN}_i, \outpt \rangle$ \cite{chehasmatbusmey18}.

\paragraph{IPD}
The paper \cite{meybarbusyueehr15} re-interprets the control solution of \cite{tod07} as a technique to create the family $\{P_\zeta\}$ through the solution to the nonlinear program:
\begin{equation}
\begin{aligned}
P_\zeta \eqdef 
\argmax  \  
& \bigl\{  \zeta  \lgl \pi,  \outpt \rgl  - \DVrate(P\| P^0)  \bigr\}  \,, \qquad\zeta\in\Re\,,
\end{aligned} 
\label{e:IPD}
\end{equation}
where $\DVrate$ denotes the rate function of Donsker and Varadhan  \cite{demzei98a,konmey05a},
\begin{equation}
\DVrate(P\| P^0) \eqdef \sum_{x,x'} \pi(x) P(x,x') \log\Bigl(\frac{P(x,x')}{P^0(x,x')} \Bigr)
\label{e:DVrate}
\end{equation}  
in which $\pi$ is the invariant pmf for $P$. The maximum in \eqref{e:IPD} is over all $(\pi, P)$ subject to the invariance constraint $\pi P = \pi$
\cite{meybarbusyueehr15, busmey16v}.   The convex program \eqref{e:IPD} is called the \textit{Individual Perspective Design} (IPD) in \cite{busmey16v}.

Hence IPD-Q may be interpreted as a new approach to designing $\{ P_\zeta \}$.

The finite-horizon version of \eqref{e:IPD} is also considered in \cite{meybarbusyueehr15,busmey16v}, similar to the KLQ formulation:
\begin{equation}
\begin{aligned}
\bfmp^\zeta & \eqdef \argmax_\bfmp \Bigl\{ \zeta \Expect_p\Bigl[ \sum_{k =1}^K  \outpt(x_k)  \Bigr] - D(\bfmp \| \bfmp^0) \Bigr\} \, .
\end{aligned}
\label{e:IPDfinite}
\end{equation}
Provided the entries of $T_k(x,\st)$ take on only binary values,
the finite-horizon IPD solution is obtained as a tilting of  the nominal model:
\begin{equation}
\bfmp^\zeta (\vx) = \bfmp^0(\vx) \exp\Bigl(\zeta \sum_{k=1}^K \outpt(x_k) - \Lambda(\zeta) \Bigr)\,,
\quad \textit{with $\Lambda(\zeta)$  a normalizing constant.   
}
\label{e:IPDsolnM}
\end{equation}

\paragraph{KLQ and optimal transport}  
Extensions of the KLQ objective will likely provide useful relaxations of the classical \textit{optimal transport} problem, in which the goal is to steer   $p^0$ to a given target pmf $p^\star$ 
\cite{vil08,peycut20,chegeopav20}.     
Rather than match the target pmf, we might match  $M$ generalized moments,  minimizing $D(\bfmp\| \bfmp^0)    $  subject to  $\langle \bfmp , \clG_i \rangle = 
\langle \bfmp^\star , \clG_i \rangle $ for each $i$,  with $\clG_i \colon\state^{K+1}\to\Re$.  

A special case is the tracking problem,
\begin{equation}
\begin{aligned}
&
\min_\bfmp  \bigl\{ 
D(\bfmp\| \bfmp^0)    
\quad \text{subject to} \ \Expect_p\bigl[\outpt(X_k) \bigr]    = r_k  \,, \ \  1\le k\le K \bigr\}
\end{aligned} 
\label{e:cc17}
\end{equation}
This optimization problem  is proposed in \cite[Section~5]{cheche17b},    along with the explicit solution  
\begin{equation}
\bfmp^\star (\vx) = \bfmp^0 (\vx) \exp\Bigl(\sum_{k =1}^K \beta_k \outpt(x_k) - \Lambda(\beta) \Bigr)
\label{e:cc17_soln}
\end{equation}
in which $\beta\in\Re^K$ are Lagrange multipliers corresponding to the $K$ constraints, and $\Lambda(\beta)$ a normalizing constant.

The convex program formulation \eqref{e:primal} has many advantages. First, \eqref{e:primal} is always feasible, while feasibility of \eqref{e:cc17} requires conditions on $\bfmp^0$ and $\{r_k\}$. \Cref{t:opt1} requires no assumptions on the model or reference signal. 
Flexibility in choice of $\kappa$ allows for \textit{learning} the characteristics of an ``expensive'' reference signal. It is anticipated that the penalty parameter $\kappa$ can be used to make tradeoffs between tracking performance and robustness to modeling error: robustness and sensitivity analysis will be a topic of future research.

 Finally,  as assumed to obtain the representation \eqref{e:IPDsolnM}, the formula 
 \eqref{e:cc17_soln} is meaningful only when  $T_k(x,\st)$ take on only binary values.   
 A goal of the research surveyed in this paper is to remove this restriction.   

\smallskip

The similarity between \eqref{e:IPDsolnM} and \eqref{e:cc17_soln} is not accidental, but follows from an  alternative interpretation of the  IPD design \eqref{e:IPDfinite}. For a scalar $r_0 \in \Re$, consider the constrained optimization problem
\begin{equation}
\begin{aligned}
&
\max_\bfmp  \bigl\{  - D(\bfmp \| \bfmp^0)        \bigr\} \,, \quad  \text{subject to} \   \Expect_p\Bigl[ \sum_{k =1}^K \outpt(x_k)  \Bigr]    = K r_0    \,, \quad 1\le k\le K
\end{aligned}
\label{e:IPDfinite-interpreted}
\end{equation}
The dual function $\varphi^\star\colon\Re\to\Re$ is defined by  
$$
\varphi^\star(\lambda)  = \max_\bfmp \Bigl\{ \lambda  \Expect_p\Bigl[ \sum_{k =1}^K \outpt(x_k) \Bigr] - D(\bfmp \| \bfmp^0) \Bigr\} - \lambda K r_0
$$
where $\lambda\in\Re$ is a Lagrange multiplier. It is evident that the optimizer $\bfmp^{*\lambda}$ is an IPD solution for each $\lambda$. Consequently, for each $\zeta$, the IPD solution \eqref{e:IPDfinite} also solves \eqref{e:IPDfinite-interpreted} for some scalar $r_0(\zeta)$.

\wham{Contributions} 

Most of the contributions were surveyed in \Cref{s:MainResults}.  
The main contribution of this paper is the discovery of  hidden convexity in the nonlinear program \eqref{e:DCMFGb},  
which leads to structure for the optimal solution in \Cref{t:opt1}.   
Properties of the dual surveyed in \Cref{t:dualFunctional} lead to computational techniques for this new class of optimal control formulations;  see  \Cref{p:algo}  and its corollary.     The application of these techniques to the  infinite-horizon setting in \Cref{s:IPD-Q} is novel, and the main results surveyed there are new.    

\smallskip
 Portions of the results reported here were summarized in the conference article \cite{cambusjimey19}.
 In this preliminary work,  the transition matrix $T_k$ was assumed \textit{deterministic}, so that all randomness arose from the randomized policy.  
 All of the results in this paper allow for general Markovian dynamics.
 
Extensions to resource allocation are summarized in \cite{cambusmey20}.   More on these topics may be found in the first author's PhD dissertation
\cite{NeilCammardellaThesis21}.

\wham{Organization} 
The remainder of this paper is organized as follows:  \Cref{s:klq} describes a relaxation technique motivated by the desire to reduce computational complexity, along with a full analysis of the convex program \eqref{e:primal} and its dual. 
\Cref{s:IPD-Q} contains extensions to the infinite-horizon setting.
Results from numerical experiments are collected together in \Cref{s:num}. 
Conclusions and directions for future research are contained in \Cref{s:conc}.

\section{Kullback-Leibler-Quadratic Optimal Control}
\label{s:klq}

\subsection{Subspace relaxation}
\label{s:sub}

A relaxation of the convex program \eqref{e:primal} is described here. Motivation is most clear from consideration of distributed control of a collection of residential water heaters. These loads are valuable as sources of virtual energy storage since they in fact are energy storage devices (in the form of heat rather than electricity), and are also highly flexible. Flexibility comes in part from their  extremely non-symmetric behavior: a typical unit may be on for just five minutes, and off continuously for more than six hours. The inter-sampling time at the load should be far less than five minutes to obtain a reliable model for control.

\begin{wrapfigure}[14]{R}[-2pt]{0.4\textwidth}
\vspace{-.4cm}

 		\centering
		\includegraphics[width=.95\hsize]{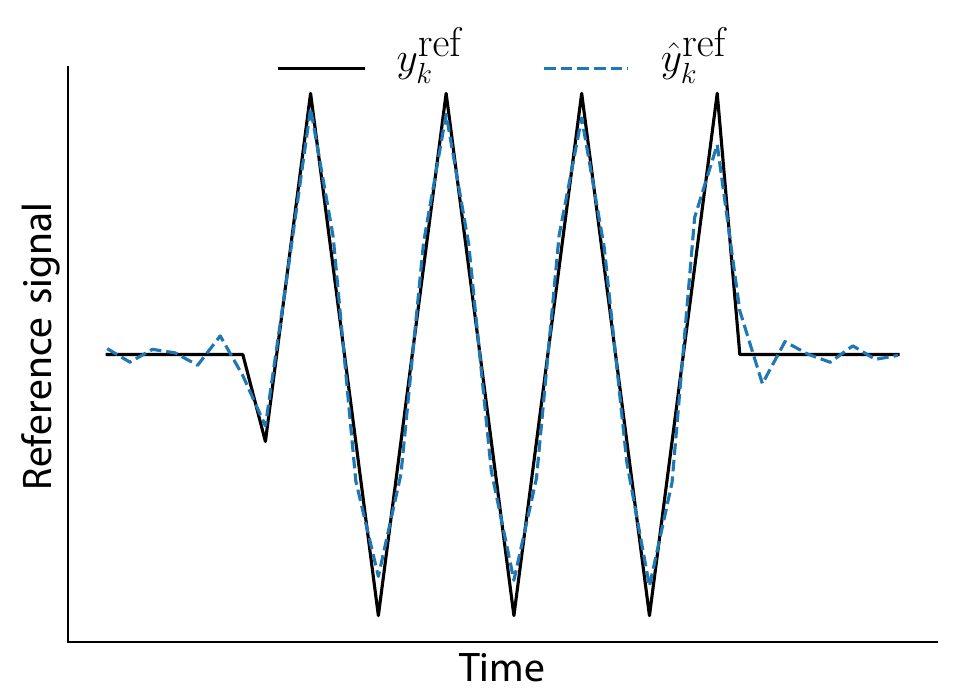}
				\vspace{-1em}
 		\caption{\small
	The scaled reference signal $ y^{\textrm{ref}}_k = r_k/\pwr $   and its transform.}
		\label{f:tri} 
\end{wrapfigure} 
	
On the other hand, it is valuable for the time horizon to be on the order of several hours. For example, peak-shaving is more effective when water heaters have advance warning to pre-heat the water tanks. To obtain a useful control solution will thus require a very large value of $K$ in \eqref{e:primal}. To reduce complexity, an approach is proposed here based on lossy compression of $\{r_k\}$ using transform techniques.



The transformations are based on a collection of functions $\{w_n:  1 \le n \le N \}$, with $w_n\colon \{0,1,\dots,K\} \to \Re$ for each $n$, and $N\ll K$. The transformed signal is the $N$-dimensional vector $\har$ with  $\har_n = \sum_k w_n(k) r_k$ for each $n$, and the transformed function on $\state^{K+1}$ is denoted 
\begin{equation*}
\haoutpt_n(\vx) = \sum_{k=1}^K  w_n(k) \outpt (x_k)  \,, \qquad   1\le n\le N  
\label{e:fn}
\end{equation*}

The goal is to achieve the approximation $\lgl \bfmp, \haoutpt_n \rgl \approx \har_n$ for each $n$, while maintaining $\bfmp \approx \bfmp^0$. For example, a Fourier series can be used,  with  frequency $\omega>0$, and $N$ is necessarily odd: 
$$
\{w_n(k):  1\le n\le N\} = \{ 1,  \sin(\omega m k)  ,  \cos(\omega m k)  :  1\le m \le (N-1)/2 \}
$$
An example using a Fourier series is shown in \Cref{f:tri}---details are postponed to \Cref{s:num}.

The \textit{degenerate} family is defined using $N=K$, and
\begin{equation}
w_n(k) = \ind\{ n=k\}\,,\qquad 1\le n,k \le K
\label{e:waveletDegen}
\end{equation}

The optimal control problem with \textit{subspace relaxation} is defined as the optimal control problem 
\begin{subequations} \label{e:primal2}
\begin{align}
J^\star(\nu_0^0) \eqdef  \min_{\nu,\gamma}  \,  \sum_{k=1}^K  & \clD(\nu_k ,\nu_k^0) + \frac{\kappa}{2} \sum_{n=1}^N \gamma_n^2
\label{e:j_star2}
\\
 \text{s.t.} \quad
\gamma_n   &= \lgl \bfmp, \haoutpt_n \rgl - \har_n\,, \quad 1\le n\le N 
\label{e:gamma_constraint2}
\\
 \sum_{u'} \nu_k (\st', u')  &=  \sum_{\st,u}\nu_{k-1} (\st, u)  T_{k-1}(x, \st')  \,, \qquad \ 1\le k\le  K\,, \ s'\in\S
\label{e:dynamics_constraint2}
\end{align}
This reduces to \eqref{e:primal} in the degenerate case \eqref{e:waveletDegen}.

\label{e:primalAndConstraints}
\end{subequations}

The theory that follows is based in part on a relaxation of the dynamical constraints \eqref{e:dynamics_constraint2}, through the introduction of a Lagrange multiplier for each $k$.   This is precisely the first step in the construction of the Hamiltonian in the Minimum Principle approach to optimal control  \cite{lue97}.

\subsection{Duality}
\label{s:klqDual}

Structure for the solution of \eqref{e:primal2} will be obtained by consideration of a dual, in which $\lambda \in \Re^N$ and $g \in \Re^{K \times J^\star}$ denote the vectors of Lagrange multipliers for the first and second set of constraints, respectively.
The matrix $g$ is interpreted as a sequence of functions $g_k\colon\S\to\Re$ that are entirely analogous   to the co-state variables in the Minimum Principle (the Lagrange multipliers for the dynamical constraints) \cite{lue97}.

The Lagrangian is denoted
\begin{align}
 \clL(\nu, \gamma, \lambda, g) =&
\sum_{k=1}^K \clD(\nu_k ,\nu_k^0) + \frac{\kappa}{2} \sum_{n=1}^N \gamma_n^2 + \sum_{n=1}^N \lambda_n  \Bigl( \gamma_n + \sum_{k=1}^K w_n(k) \bigl[r_k - \langle \nu_k, \outpt \rangle \bigr] \Bigr)
\nonumber
\\
& +
\sum_{k=1}^K \sum_{\st'} \Big(\sum_{u'} \nu_k (\st', u') - \sum_{\st,u}\nu_{k-1} (\st, u) T_{k-1}(x, \st')\Big)g_k(\st')
\label{e:Lagrangian}
\end{align}
and the dual function  is defined to be its minimum:
$$
\varphi^\star(\lambda, g) \eqdef \min_{\nu, \gamma} \clL(\nu, \gamma, \lambda, g)
$$
The \textit{dual} of the optimization problem \eqref{e:primal2} is defined as the maximum of the dual function $\varphi^\star$ over $\lambda$ and $g$  (see  \cite{lue97} for a complete and accessible treatment of this theory).
We will see that there is no duality gap,  so that for a quadruple $(\nu^\star,\gamma^\star, \lambda^\star, g^\star)$,    
$$
J^\star(\nu_0^0) = \clL(\nu^\star,\gamma^\star, \lambda^\star, g^\star) =\varphi^\star(\lambda^\star, g^\star) \, .
$$

In the following subsections a representation of the dual function is obtained that is suitable for optimization, which results in a valuable representation for the optimal policy. Properties of the dual function are contained in \Cref{t:dualFunctional} and \Cref{p:algo} that follow. The statement of these results requires additional notation: define a function $\clT^\lambda_k \colon \Re^{|\S|} \to \Re^{|\S|}$, for $f \colon \S \to\Re$ and $\lambda \in \Re^N$, via
\begin{equation}
\begin{aligned} 
\clT^\lambda_k(f;\st) &= \log\Bigl( \sum_u \fee_k^0(u\mid \st) \exp\bigl(\sum_{\st'}T_k(x, \st')  f(\st') + \chlambda_k \outpt(\st,u) \bigr) \Bigr) \,, \quad \st \in \S \, ,
\\
&
\textit{where}
\quad
\chlambda_k = \sum_{n=1}^N \lambda_n w_n(k) 
\label{e:chlambda}
\end{aligned} 
\end{equation}
The maximum of the dual function over $g$ is denoted
\begin{equation*}
\varphi^\star(\lambda) \eqdef \max_g \fee^\star(\lambda,g)=\varphi^\star(\lambda,g^\lambda)
\end{equation*}
where $g^\lambda$ is a maximizer,    $\displaystyle 
g^\lambda \in \argmax_g \fee^\star(\lambda,g)$.
It is shown in \Cref{p:algo} that the vector valued  function $g^\lambda$ satisfies the recursion
\begin{equation}
g_k^\lambda = \clT^\lambda_k(g^\lambda_{k+1}) \, , \quad 1 \le k \le K \,, \quad \text{where} \quad g^\lambda_{K+1} \equiv 0  \,.
\label{e:g_lambda2}
\end{equation}
This forms part of the proof of \Cref{t:dualFunctional}, with complete details postponed to  \Cref{app:B}.
\begin{theorem}
\label{t:dualFunctional}
There exists a maximizer $\{ \lambda_n^\star, g^\star_k : 1 \le n \le N, 1 \le k \le K \}$ for $\varphi^\star$,   
and there is no duality gap:
$$
\varphi^\star(\lambda^\star,g^\star) = J^\star(\nu_0^0) 
$$
The optimal policy is obtained from $\{g^\star_k\}$ via: 
\begin{equation}
\begin{aligned}
\fee_k^\star(u\mid \st) 
&=
\fee_k^0(u\mid \st) \exp\bigl(\sum_{\st'}T_k(x, \st')  g_{k+1}^\star(\st') + \chlambda^\star_k \outpt(\st,u) - g_k^\star(\st) \bigr)\
\\
\text{\it where} \;\; 
g_k^\star (\st) &= 
\clT^\lambda_k(g^\star_{k+1};\st) \;\; \text{\it for} \;\; 1 \le k \le K, \;\; \text{\it and} \;\; g^\star_{K+1} \equiv 0 \,,
\end{aligned} 
\label{e:phi-h-Lambda}
\end{equation}
and  $\{ \chlambda^\star_k \}$ are obtained from $\{ \lambda^\star_n \}$ via \eqref{e:chlambda}.  \qed
\end{theorem}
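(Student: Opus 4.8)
The plan is to treat \eqref{e:primal2} as a finite-dimensional convex program and to read off both the tilted policy and the recursion for $g$ from Lagrangian duality applied to \eqref{e:Lagrangian}. By \Cref{p:convexity} the objective is convex and the constraints \eqref{e:gamma_constraint2}--\eqref{e:dynamics_constraint2} are affine. I would first check a constraint qualification: the nominal sequence $\{\nu_k^0\}$, with slacks $\gamma_n$ defined through \eqref{e:gamma_constraint2}, is feasible, has finite objective ($\clD(\nu_k^0,\nu_k^0)=0$), and has conditionals equal to $\phi_k^0$; provided $\phi_k^0>0$ this point lies in the relative interior of the domain of the (extended-real-valued) objective, so a refined Slater condition holds. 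A standard convex-duality theorem then gives no duality gap and attainment of the dual supremum over $(\lambda,g)$.

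Next I would evaluate $\varphi^*(\lambda,g)=\min_{\nu,\gamma}\clL$ in closed form. Minimizing \eqref{e:Lagrangian} over $\gamma$ is an unconstrained quadratic with minimizer $\gamma_n=-\lambda_n/\kappa$. After collecting the multiplier terms through \eqref{e:chlambda}, the Lagrangian decouples across $k$ in $\nu$. The crucial structural point is that $\clD$ is the relative-entropy \emph{rate}, so that $\clD(\nu_k,\nu_k^0)=\sum_{\st}\hat{\nu}_k(\st)\,D(\phi_k(\cdot\mid\st)\|\phi_k^0(\cdot\mid\st))$ and the minimization over $\nu_k\in\clX$ splits into an inner problem over each conditional $\phi_k(\cdot\mid\st)$ and an outer problem over the state marginal $\hat{\nu}_k$. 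The inner problem is solved by the Gibbs variational principle, yielding the tilted conditional of \eqref{e:phi-h-Lambda} and a per-state value $g_k(\st)-\clT^\lambda_k(g_{k+1};\st)$; since $\clD$ does not depend on $\hat{\nu}_k$, the outer problem is linear in $\hat{\nu}_k$ and is attained at a vertex of the simplex. This produces
\[
\varphi^*(\lambda,g)=c(\lambda)-\sum_{\st,u}\nu_0^0(\st,u)\sum_{\st'}T_u(\st,\st')g_1(\st')+\sum_{k=1}^K\min_{\st}\bigl[\,g_k(\st)-\clT^\lambda_k(g_{k+1};\st)\,\bigr],
\]
where $c(\lambda)=\sum_k\chlambda_k r_k-\tfrac1{2\kappa}\sum_n\lambda_n^2$ is independent of $g$.

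The hard part will be the maximization over $g$ of this nonsmooth concave function; the nonsmoothness comes precisely from the vertex minimization over $\hat{\nu}_k$ that using the entropy rate (rather than full relative entropy) introduces, and it is exactly this feature that allows $T_u$ to be a general stochastic matrix. The resolution exploits two properties of $\clT^\lambda_k$ inherited from $T_u$ being row-stochastic: it is monotone in its vector argument, and it commutes with constants, $\clT^\lambda_k(f+c\One)=\clT^\lambda_k(f)+c$, because $\sum_{\st'}T_u(\st,\st')=1$. For arbitrary $g$, setting $m_k=\min_\st[g_k(\st)-\clT^\lambda_k(g_{k+1};\st)]$ gives the pointwise bound $g_k\ge\clT^\lambda_k(g_{k+1})+m_k\One$; telescoping these bounds backward from $k=K$ (with $g_{K+1}\equiv0$), using monotonicity and constant-commutation at each step, yields $g_1\ge g_1^\lambda+\bigl(\sum_k m_k\bigr)\One$, where $g^\lambda$ is the recursion \eqref{e:g_lambda2}. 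Pairing with the pmf $\nu_0^0$ and again using $\sum_{\st'}T_u(\st,\st')=1$ shows $\varphi^*(\lambda,g)\le\varphi^*(\lambda,g^\lambda)$, so the recursion solution maximizes over $g$, every min term vanishes, and the optimal conditional reduces exactly to \eqref{e:phi-h-Lambda} with $g_k^*=\clT^\lambda_k(g^*_{k+1})$.

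Finally, for the outer problem I would observe that $\varphi^*(\lambda)\eqdef\varphi^*(\lambda,g^\lambda)$ is concave in $\lambda$, and that the strictly concave term $-\tfrac1{2\kappa}\sum_n\lambda_n^2$ dominates the at-most-linear growth of the remaining terms (the map $\lambda\mapsto g^\lambda$ grows at most linearly through $\chlambda_k$), so $\varphi^*$ is coercive and a maximizer $\lambda^*$ exists; together with the no-gap conclusion this proves the theorem. The remaining work is technical rather than conceptual: verifying the relative-interior hypothesis when some $\phi_k^0(u\mid\st)=0$, and confirming attainment of the inner minimizers via lower semicontinuity of $\clD$ on the compact simplex.
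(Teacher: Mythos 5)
Your proposal is correct and follows essentially the same route as the paper: Lagrangian decomposition with the Gibbs variational principle for the conditional pmfs, linear (vertex) minimization over the state marginals yielding the $\min_\st$ terms, monotone telescoping of $\clT^\lambda_k$ to identify the maximizing $g^\lambda$ via the backward recursion, anti-coercivity of $\varphi^*(\lambda)$ (from the linear growth of $g^\lambda$ in $\|\lambda\|$ against the $-\|\lambda\|^2/(2\kappa)$ term) for existence of $\lambda^*$, and Slater's condition for zero duality gap. The only cosmetic difference is that you carry the constants $m_k$ through the telescoping explicitly using the property $\clT^\lambda_k(f+c)=\clT^\lambda_k(f)+c$, where the paper instead normalizes each minimum to zero without loss of generality.
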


Denote for each $k$, 
\begin{equation}
\Tg^\lambda_k(x) = \sum_{\st}T_{k-1} (x, \st)g^\lambda_{k}(\st)
\label{e:Gamma}
\end{equation}
The proof of the following  is also contained in \Cref{app:B}.
\begin{proposition}
\label[proposition]{p:algo}
The following hold for the dual of  \eqref{e:primal2}: for each $\lambda\in\Re^N$,
\begin{romannum}
\item A maximizer $g^\lambda$ is given by \eqref{e:g_lambda2}

\item The maximum of the dual function over $g$ is the concave function
\begin{equation}
\varphi^\star(\lambda) = \lambda^T \har
-\frac{1}{2\kappa} \|\lambda\|^2 - \lgl \nu_0^0, \Tg^\lambda_1 \rgl
\label{e:dualAlambda}
\end{equation}

\item The function \eqref{e:dualAlambda} is continuously differentiable, and
\begin{equation}
\frac{\partial}{\partial\lambda_n}  \varphi^\star(\lambda)  =  \har_n - \frac{1}{\kappa}  \lambda_n - \sum_{k=1}^{K} w_n(k) \lgl \nu^\lambda_k, \outpt \rgl \,, \qquad 1\le n\le N
\label{e:phi_der_gen}
\end{equation}
where $\{\nu^\lambda_k\}$ is the sequence of marginals obtained from the randomized policy defined in \eqref{e:phi-h-Lambda}, substituting $\{g^\star_k\}$ by $\{g^\lambda_k\}$ defined in (i). 
\qed
\end{romannum}
\end{proposition}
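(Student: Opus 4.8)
The plan is to evaluate the dual functional $\varphi^*(\lambda,g)=\min_{\nu,\gamma}\clL(\nu,\gamma,\lambda,g)$ in closed form for fixed $(\lambda,g)$ and then carry out the maximization over $g$. Minimization over each $\gamma_n$ is an unconstrained scalar quadratic with minimizer $\gamma_n=-\lambda_n/\kappa$; this contributes $-\tfrac{1}{2\kappa}\|\lambda\|^2$ and, together with the reference terms, the linear piece $\lambda^{T}\har$. Because the dynamics \eqref{e:dynamics_constraint2} have been dualized, the Lagrangian \eqref{e:Lagrangian} is separable across $\nu_1,\dots,\nu_K$: each $\nu_k$ enters only through its own entropy term, the output term (with coefficient $\chlambda_k$), and the pairing with $g_k$ and with the one-step average $\Tg_{k+1}$ of $g_{k+1}$. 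Writing $\nu_k(\st,u)=\spmf_k(\st)\phi_k(u\mid\st)$ and using $\clD(\nu_k,\nu_k^0)=\sum_\st\spmf_k(\st)D(\phi_k(\cdot\mid\st)\|\phi_k^0(\cdot\mid\st))$, the inner minimization over each conditional $\phi_k(\cdot\mid\st)$ is a Gibbs variational problem whose minimizer is the tilted policy \eqref{e:phi-h-Lambda} and whose optimal value is $-\clT^\lambda_k(g_{k+1};\st)$. The residual minimization over the state marginal $\spmf_k$ is linear over the simplex, so
\begin{equation*}
\varphi^*(\lambda,g)=\lambda^{T}\har-\frac{1}{2\kappa}\|\lambda\|^2-\lgl\nu_0,\Tg_1\rgl+\sum_{k=1}^K\min_{\st}\bigl[g_k(\st)-\clT^\lambda_k(g_{k+1};\st)\bigr],
\end{equation*}
where $\Tg_k(\st,u)=\sum_{\st'}T_u(\st,\st')g_k(\st')$ (so \eqref{e:Gamma} is the case $g=g^\lambda$).

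For (i) I would show that the backward recursion \eqref{e:g_lambda2} maximizes this over $g$. Since $g^\lambda_k=\clT^\lambda_k(g^\lambda_{k+1})$ makes every bracket vanish, each $\min_\st$ term is zero at $g^\lambda$, and the task is to prove no other $g$ does better. Let $\{\nu^\lambda_k\}$ be the forward marginals generated by the tilted policy, with state marginals $\spmf^\lambda_k$; since $\spmf^\lambda_1$ is the one-step pushforward of $\nu_0$, one has $\lgl\nu_0,\Tg_1\rgl=\lgl\spmf^\lambda_1,g_1\rgl$ for \emph{every} $g_1$. For arbitrary $g$, bound each minimum from above by its $\spmf^\lambda_k$-average, $\min_\st[g_k-\clT^\lambda_k(g_{k+1})]\le\lgl\spmf^\lambda_k,g_k\rgl-\lgl\spmf^\lambda_k,\clT^\lambda_k(g_{k+1})\rgl$, and bound $\clT^\lambda_k(g_{k+1};\st)$ from below by the Gibbs inequality evaluated at the test conditional $\phi^\lambda_k(\cdot\mid\st)$. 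The flow identity $\spmf^\lambda_{k+1}(\st')=\sum_{\st,u}\nu^\lambda_k(\st,u)T_u(\st,\st')$ then telescopes the sum so that all $g$-dependence cancels, leaving a $g$-independent upper bound that is attained with equality at $g=g^\lambda$ (where the Gibbs inequality is tight and each average equals the minimum). This is the step I expect to be the main obstacle: the linear-in-marginal reduction leaves a nonsmooth $\min_\st$, and closing the gap requires precisely this pairing of the min-versus-average bound with the Gibbs lower bound, with the two tightness conditions coinciding exactly at the recursion solution.

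Part (ii) is then immediate: substituting $g^\lambda$ annihilates the $\min_\st$ terms and leaves \eqref{e:dualAlambda}. Concavity needs no extra work, since $\varphi^*(\lambda,g)$ is an infimum of functions affine in $(\lambda,g)$ and hence jointly concave, and partial maximization over $g$ preserves concavity in $\lambda$.

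For (iii), differentiability is immediate from \eqref{e:dualAlambda}: the backward recursion \eqref{e:g_lambda2} composes smooth log-sum-exp maps (with $\chlambda_k$ linear in $\lambda$ by \eqref{e:chlambda}), so $\lambda\mapsto g^\lambda_1$, and hence $\varphi^*$, is $C^1$ (indeed $C^\infty$). For the derivative formula, only $\partial_{\lambda_n}\lgl\nu_0,\Tg^\lambda_1\rgl$ is nontrivial. Differentiating \eqref{e:g_lambda2} yields a sensitivity recursion for $\partial_{\lambda_n}g^\lambda_k$ in which the derivative of the log-partition produces the tilted conditional $\phi^\lambda_k$ together with a source term $w_n(k)\sum_u\phi^\lambda_k(u\mid\st)\outpt(\st,u)$; pairing this recursion against the forward marginal flow telescopes it to $\sum_{k=1}^K w_n(k)\lgl\nu^\lambda_k,\outpt\rgl$. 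Combined with $\partial_{\lambda_n}\bigl(\lambda^{T}\har-\tfrac{1}{2\kappa}\|\lambda\|^2\bigr)=\har_n-\lambda_n/\kappa$, this gives \eqref{e:phi_der_gen}; equivalently, this is the envelope identity $\partial_{\lambda_n}\varphi^*=\partial_{\lambda_n}\clL$ evaluated at the saddle point, and continuity of the derivative follows from continuous dependence of $\{\nu^\lambda_k\}$ on $\lambda$.
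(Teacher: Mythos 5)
Your proposal is correct, and its first half coincides with the paper's own route: the $\gamma$-minimization, the substitution $\nu_k=\spmf_k\,\phi_k(\cdot\mid\st)$, the Gibbs variational problem for each conditional yielding the tilted policy with value $-\clT^\lambda_k(g_{k+1};\st)$, and the linear reduction to $\min_\st$ are exactly \Cref{lm:dual_lambda_h}. You diverge genuinely in the two remaining steps. For the maximization over $g$, the paper (\Cref{lm:dual_lambda}) uses translation invariance of the Lagrangian in each $g_k$ to normalize $\min_\st[g_k-\clT^\lambda_k(g_{k+1};\st)]=0$, hence $g_k\ge\clT^\lambda_k(g_{k+1})$, and then monotonicity of the operators $\clT^\lambda_k$ to force $g_1\ge g_1^\lambda$ pointwise --- a short order-theoretic argument. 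Your min-versus-average bound combined with the Donsker--Varadhan/Gibbs lower bound at the test conditional $\phi^\lambda_k$ and the flow identity $\lgl \nu^\lambda_k, \Tg_{k+1}\rgl=\lgl\spmf^\lambda_{k+1},g_{k+1}\rgl$ does telescope to a $g$-free upper bound, namely $\lambda^T\har-\tfrac{1}{2\kappa}\|\lambda\|^2+\sum_k\clD(\nu^\lambda_k,\nu^0_k)-\sum_k\chlambda_k\lgl\nu^\lambda_k,\outpt\rgl$, which equals $\varphi^*(\lambda,g^\lambda)$ by part (ii) of \Cref{c:optLLR}, and all inequalities are tight at $g=g^\lambda$; so the argument closes. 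It is longer than the paper's, but it avoids the normalization trick and makes the complementary-slackness structure explicit. For part (iii), the paper invokes the envelope identity via the first-order condition $\tfrac{\partial}{\partial g}\varphi^*(\lambda,g^\lambda)=0$, whereas your primary route differentiates the backward recursion directly and telescopes the sensitivity equation against the forward marginals; this yields the same formula $\sum_k w_n(k)\lgl\nu^\lambda_k,\outpt\rgl$ for $\tfrac{\partial}{\partial\lambda_n}\lgl\nu_0,\Tg^\lambda_1\rgl$ and, as you note, is the envelope computation carried out by hand. Both of your substitute arguments are sound.
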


To conclude this section, we provide representations of the log-likelihood ratio, $L(\vx)$, relative entropy $D(\bfmp^\lambda \| \bfmp^0)$, and primal objective function for the pmf $\bfmp^\lambda \in \clS(\state^{K+1})$ obtained from the randomized policy defined in \eqref{e:phi-h-Lambda}, substituting $\{g^\star_k\}$ by $\{g^\lambda_k\}$ defined in \Cref{p:algo}, part (i). The proof of the following  is contained in \Cref{app:B}: 
\begin{corollary}
\label[corollary]{c:optLLR}
The following hold for all $\{ \chlambda_k, g^\lambda_k : 1 \le k \le K \}$:
\begin{romannum}
\item The log-likelihood ratio can be expressed:
\begin{equation}
L(\vx) =  \sum_{k=1}^{K} \{ \Delta_{k}(x_{k-1},s_{k}) + \chlambda_k \outpt(x_k)\} - \Tg^\lambda_1(x_0)
\label{e:llr}
\end{equation}
where for each $k$  \emph{(recalling $x_k=(\st_k, u_k)$)},      
\begin{equation}
\Delta_{k}(x_{k-1},\st_{k}) =   \Tg^\lambda_k(x_{k-1})   - g^\lambda_{k}(\st_{k})
\label{e:delta}
\end{equation}
\item The relative entropy is given by
\begin{equation}
D(\bfmp^\lambda \| \bfmp^0) = \sum_{k=1}^{K} \chlambda_k \lgl \nu^\lambda_k, \outpt \rgl 
				- \lgl \nu_0^0, \Tg^\lambda_1 \rgl
\label{e:rel_ent}
\end{equation}

\item The value  of the primal is given by
\begin{subequations}
\begin{align}
J(\bfmp^\lambda, \nu_0^0) &\eqdef D(\bfmp^\lambda \| \bfmp^0) + \frac{\kappa}{2} \sum_{n=1}^N \gamma_n^2
\label{e:primal3}
\\
&= - \lgl \nu_0^0, \Tg^\lambda_1 \rgl + \sum_{k=1}^{K} \chlambda_k \lgl \nu^\lambda_k, \outpt \rgl  + \frac{\kappa}{2} \sum_{n=1}^N \gamma_n^2 
\label{e:primal4}
\end{align}
with $\gamma_n =  \lgl \bfmp^\lambda, \haoutpt_n \rgl - \har_n$.
\end{subequations}
\qed
\end{romannum}
\end{corollary}
The stochastic process $\{\Delta_{k}(X_{k-1},S_{k}) \}$ is a martingale difference sequence; it vanishes when nature is deterministic,  reducing to the solution obtained in \cite{cambusjimey19}.
 
\begin{wrapfigure}[13]{R}[-2pt]{0.325\textwidth}

\centering

  	\includegraphics[width=.95\hsize]{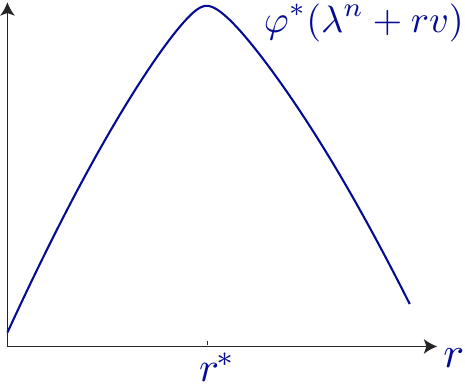}
	
	\caption{\small
	\small	 Dual function along a line-segment}
	\label{f:phi_plot.pdf}
\end{wrapfigure}

\subsection{Algorithms} \label{s:alg}
Given the simple form of the derivative \eqref{e:phi_der_gen}, it is tempting to apply gradient ascent to obtain $\lambda^\star$. The difficulty with standard first-order methods is illustrated in \Cref{f:phi_plot.pdf}. This is a plot of a typical example in which $\lambda^n\in\Re^N$ is given, $v= \nabla \varphi^\star\, (\lambda^n)$, and the plot shows $\varphi^\star\, (\lambda^n +r v)$ for a range of positive $r$. We have found in examples that using gradient ascent on this cone-shaped curve may be slow to converge, likely due to a large ``overshoot'' when applying standard first-order methods.

In the numerical results that follow we opt for proximal gradient methods \cite{parboy13}.

\wham{Monte Carlo methods.}
The gradient of the dual function may be expressed in terms of the first-order statistics of the random variables $\{\haoutpt_n(\vX): 1\le n\le N\}$ when $\vX\sim \bfmp^\lambda$:
\begin{equation}
\begin{aligned}
\Expect [\haoutpt_n(\vX)]
&= \sum_{\vx} \bfmp^\lambda(\vx) \sum_{k=1}^{K} w_n(k) \outpt(x_k)
\\ &= \sum_{k=1}^{K} w_n(k) \sum_{x_k} \sum_{x_i, i \neq k} \bfmp^\lambda(\vx) \outpt(x_k)
 = \sum_{k=1}^{K} w_n(k) \lgl \nu^\lambda_k, \outpt \rgl
\label{e:Moments}
\end{aligned}
\end{equation}

\Cref{t:phi_der} follows from  \eqref{e:phi_der_gen} combined with \eqref{e:Moments}:
\begin{lemma}
	\label{t:phi_der}
	For any $\lambda\in\Re^N$ and $1 \le n \le N$,
	\begin{equation}
	\frac{\partial}{\partial\lambda_n}    \varphi^\star(\lambda) = \har_n - \frac{1}{\kappa}  \lambda_n - \Expect[\haoutpt_n(\vX)]\,,
	\quad
	\textit{in which $\vX \sim \bfmp^\lambda$.}
	\end{equation}
	\qed
\end{lemma}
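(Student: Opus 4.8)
The plan is to recognize that \Cref{t:phi_der} follows immediately from \Cref{p:algo}(iii) once the term $\sum_{k=1}^K w_n(k)\lgl \nu^\lambda_k, \outpt \rgl$ appearing in the gradient formula \eqref{e:phi_der_gen} is identified with the expectation $\Expect[\haoutpt_n(\vX)]$ for $\vX\sim\bfmp^\lambda$. Thus the whole argument reduces to the moment identity \eqref{e:Moments}, after which a direct substitution into \eqref{e:phi_der_gen} finishes the proof.

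To establish the identity, I would start from the definition $\haoutpt_n(\vx)=\sum_{k=1}^K w_n(k)\outpt(x_k)$ and write out $\Expect[\haoutpt_n(\vX)]=\sum_{\vx}\bfmp^\lambda(\vx)\haoutpt_n(\vx)$. Since $\state^{K+1}$ is a finite product space, I can freely interchange the sum over paths $\vx$ with the sum over the time index $k$; this is the only step requiring any justification, and it is immediate from finiteness. Each resulting term is $w_n(k)\sum_{\vx}\bfmp^\lambda(\vx)\outpt(x_k)$, and marginalizing $\bfmp^\lambda$ over every coordinate $x_i$ with $i\neq k$ collapses the inner sum to the $k$th marginal, giving $\sum_{x_k}\nu^\lambda_k(x_k)\outpt(x_k)=\lgl \nu^\lambda_k, \outpt \rgl$. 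This is precisely the chain of equalities displayed in \eqref{e:Moments}.

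Combining the two observations yields $\Expect[\haoutpt_n(\vX)]=\sum_{k=1}^K w_n(k)\lgl \nu^\lambda_k, \outpt \rgl$, which is exactly the final summand in \eqref{e:phi_der_gen}. Substituting then gives
$$
\frac{\partial}{\partial\lambda_n}\varphi^*(\lambda)=\har_n-\frac{1}{\kappa}\lambda_n-\Expect[\haoutpt_n(\vX)],\qquad \vX\sim\bfmp^\lambda,
$$
as claimed. I expect no genuine obstacle: continuous differentiability of $\varphi^*$ and the explicit gradient \eqref{e:phi_der_gen} are already secured in \Cref{p:algo}, so the only remaining content is the bookkeeping that identifies the weighted sum of marginal means with a single path-space expectation --- precisely the role of \eqref{e:Moments}.
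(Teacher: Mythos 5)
Your proposal is correct and follows exactly the paper's own route: the statement is obtained by combining the gradient formula \eqref{e:phi_der_gen} from \Cref{p:algo}(iii) with the moment identity \eqref{e:Moments}, which you re-derive by the same interchange of sums and marginalization over the finite path space. No gaps.
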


See \cite{busmeycam23} for more on Monte Carlo methods and KLQ.

\section{Feedback Formulations}
\label{s:IPD-Q}

We now turn  to the  IPD-Q convex program \eqref{e:IPDQ}.     
  It is   assumed throughout this section that $T_k =T$ and  $\fee_k^0= \fee^0 $, independent of $k$.

 The relationship between IPD-Q and \eqref{e:KLQinfty} will be clear after justification of the term   $\clDV(\spmf,\fee)  $ defined in  \eqref{e:KLrateSS}.
 Consider any $\fee\in\Upphi$, which gives rise to a Markov chain with transition matrix $P_\fee$.
The relative entropy \eqref{e:KL} was previously expressed as a sum over  $\vx\in\state^{K+1}$ in 
\eqref{e:KL}.   The notation $D(\bfmp\| \bfmp^0) =D^K(\bfmp\| \bfmp^0) $ is required in the following, since $K$ is a variable in  \eqref{e:KLQinfty}.

\begin{proposition}
\label[proposition]{t:DV}
Suppose that $\bfmp$ is obtained using the policy $\fee$, and  initial pmf $\nu_0^0$ common with $\bfmp^0$.   
Suppose moreover that $P_\fee$ has a unique invariant pmf $\pi$.
Then,
\[
\clDV(\spmf,\fee)  
=
\lim_{K\to\infty}  \frac{1}{K}     \sum_{k=1}^{K}  \clD(\nu_k ,\nu_k^0)
=
\lim_{K\to\infty}  \frac{1}{K} D^K(\bfmp\| \bfmp^0)   =  \DVrate(P_\fee\| P^0) 
\]
where $\DVrate$ denotes the rate function \eqref{e:DVrate} using $P=P_\fee$:
\begin{equation*}
\DVrate(P_\fee\| P^0) = \sum_{x,x'} \pi(x) P_\fee(x,x')   \log \Bigl(\frac{P_\fee(x,x') }{P^0(x,x')} \Bigr)
\end{equation*}  
\end{proposition}

\begin{proof}
The proof of the first identity begins with
\[
 \frac{1}{K}     \sum_{k=1}^{K}  \clD(\nu_k ,\nu_k^0)  =   \frac{1}{K}     \sum_{k=1}^{K}  \Expect[F(X_k))] 
\]
with $F(x) = \log[{\fee_k(u\mid \st)}/{\fee_k^0(u\mid \st)} ]$ for $x=(\st,u)\in\state$.     The average converges to $\clDV(\spmf,\fee) $ as $K\to\infty$ since the invariant pmf $\pi$ is unique.   
\end{proof}

The distinct approaches to optimal control pursued in this section follow the distinct approaches to optimal control in general,  via the HJB equations and optimal control via the Minimum Principle (MP):
\wham{(i)}
In \Cref{s:IPDQ_HJB} IPD-Q is interpreted as a solution to an HJB equation, which results in a solution in state feedback form,    $\fee_k^\star = \clK^\star (\nu_k^\star, r)$,  for some mapping $\clK^\star \colon \clS(\state)\times\Re \to\Upphi$.   The solution to IPD-Q is  $\fee^\star(u\mid \st) = \clK^\star (\nu^r, r)$,  in which $\nu^r$ is the steady-state marginal for $\bfmS$ under the IPD-Q policy.

  Computation of $\clK^\star$ may be difficult if   the state space is large.   An LQR approximation is proposed,   justified for small $|r|$,  and the approximation  \eqref{e:SmallSignalPolicy} may also be found at the close of \Cref{s:IPDQ_HJB}.

 \wham{(ii)}  The approach taken in \Cref{s:ACOE-IPDQ}   is in essence the infinite-$K$ limit of the approach taken in  \Cref{s:klqDual}  which,
  as noted following \eqref{e:primalAndConstraints},
   is the  Minimum Principle (MP) approach.   It is well known that this approach provides only an open-loop solution.

\subsection{HJB approach}   
\label{s:IPDQ_HJB}
 
The solution to the optimal control problem \eqref{e:IPDQ-global} may be characterized using techniques from deterministic optimal control theory.     

The ACOE holds for deterministic systems,  precisely as reviewed in \Cref{s:LQRmot} for the linear quadratic problem:
\begin{equation}
 \eta^\star + \clH^\star(\nu) = \min_\fee \bigl\{    c(\nu,\fee;r)  +   \clH^\star(   f(\nu,\fee) ) \bigr\}\,,\qquad \nu\in \clS(\state)
\label{e:KLQACOE}
\end{equation}
with $c(\nu,\fee;r)  $ defined in  \eqref{e:IPDQ-global},
$\clH^\star \colon \clS(\state) \to\Re$ the relative value function, and $ \eta^\star$ the optimal average cost.
 The minimizer $\fee^\star$ defines   $\fee_k^\star = \clK^\star (\nu_k^\star, r)$.

We are not aware of solution techniques for this instance of the ACOE, beyond the standard value iteration algorithm or other generic approaches.   

The relative value function $\clH^\star$ and feedback law  $\clK^\star $ can be approximated through  a small signal linearization of the dynamics, and a quadratic approximation of the cost.    
We begin with an approximation for the latter.  

The proof of \Cref{t:IPDQ-LQRa}  follows from the definition \eqref{e:KLrateSS} and 
a Taylor's series approximation of the logarithm.    For any $\fee$, denote  by $\tilfee(u\mid \st) \eqdef\fee(u \mid \st) - \fee^0(u \mid \st) $ the   deviation.
\begin{proposition}
\label[proposition]{t:IPDQ-LQRa} 
Suppose that $  \lgl \pi^0, \outpt \rgl =0$.   Then, 
the cost function \eqref{e:IPDQ-global} is nearly quadratic in deviations:
\[ 
  c(\nu,\fee;r)  =     \| \tilfee \|_R^2   +   \frac{\kappa}{2} (y-r)^2  +O( \| \tilfee \|_R^3)
\]
in which $y = \sum_x [\nu(x) - \pi^0(x)]\outpt(x)$,   and
$\displaystyle
 \| \tilfee \|_R^2  =  \sum_{\st,u}    \frac{\spmf(\st)} { \fee^0(u\mid \st)  }  \tilfee(u\mid\st)^2
$.
\end{proposition}

Approximation of the dynamics by a linear system is justified when $|r|$ is small,   and $\nu_0^0\approx \pi^0$,  the invariant pmf for $P^0$.    
 The corresponding stationary pmf for $\bfmS$ is denoted $\spmf^0$  (recall \eqref{e:pimu}).

Let $\state= \{x^i :  1\le i\le n\} $   with $n=|\state|$.  The LQR approximation has state denoted $\tilclX_k$ and input $\tilclU_k$ at time $k$,  with   $\tilclX_k^i$   an approximation of $\tilnu_k(x^i) \eqdef \nu_k(x^i)   -\pi^0(x^i) $,  and $\tilclU_k^i$ an approximation of $\tilfee_k^i(u^i \mid \st^i)  \eqdef\fee_k^i(u^i \mid \st^i) - \fee^0(u^i \mid \st^i) $.    The definition of the linearization is a system model of the form \eqref{e:LQR},
 \[
 \tilclX_{k+1}  =  A  \tilclX_k  +   B   \tilclU_k\,,   \qquad   \tilclY_k = C^\transpose  \tilclX_k  
 \]
 in which $ \tilclY_k $ is an approximation of $\lgl  \tilnu_k, \outpt\rgl$.
 Expressions for the $n\times n$ matrices $A$ and $B$,  and the $n$-dimensional column vector $C$,  are provided in the following.

\begin{proposition}
\label[proposition]{t:IPDQ-LQR}
The small signal approximation holds with
\[
A_{i,j}  = P^0(x^j,x^i) \,,\quad  B_{i,j} =  \ind\{i=j\}  \spmf^0(\st^j)
 \,,\quad 
 C_i = \outpt(x^i)
 	\,,\qquad 1\le i,j\le n
\]
\end{proposition}

\begin{proof}
The expression for $C$ is by definition of $\tilclY_k$.   The other matrices are obtained through the standard first-order Taylor series approximations:
 \[
 A_{i,j}   \eqdef  \frac{\partial}{\partial \nu^j }  f_i(\nu,\fee)  \Big|_{\nu=\pi^0,\fee=\fee^0}
  =  P^0(x^j,x^i)    
\]
with $\nu^j= \nu(x^j)$.

The input $\tilclU_k$ is an $n$-dimensional column vector, so that $B$ is an $n\times n$ matrix.   It is obtained from the Taylor series approximation,
 \[
 B_{i,j}   \eqdef  \frac{\partial}{\partial \fee^j }  f_i(\nu,\fee)  \Big|_{\nu=\pi^0,\fee=\fee^0}
  =   \ind\{i=j\} \sum_{x\in\state} \pi^0(x) T(x,\st^j) 
\]
where  $ \fee^j = \fee(u^j \mid \st^j)$.
By invariance of $\pi^0$ it follows that $B$ is diagonal, with $i$th diagonal entry equal to $\spmf^0(\st^i)$.
\end{proof}
 
\Cref{t:IPDQ-LQRa,t:IPDQ-LQR} imply that for small $r$,  the solution to the nonlinear optimal control  problem is approximated by the average-cost LQR  solution using
 \[
  c(\tilclX,\tilclU;r)  =     \| \tilclU \|_R^2   +   \frac{\kappa}{2} (\tilclY-r)^2  \,,  \quad \tilclY = C^\transpose \tilclX
 \]
 giving $\tilclU_k = - K^\star \tilclX_k  +  G^\star  r$,  with gain matrices  $ K^\star $   ($n\times n$) and $ G^\star $   ($n\times 1$). 
 
 This leads to the policy approximation.   Write $\tilclU_k = \clU_k - \clU_k^0$  with  $\clU_k^0$ the  vector representation of the nominal policy.   The $i$th entry of the input is expressed
 \[
 \begin{aligned} 
 \clU_k^i  &=   \fee^0(u^i\mid \st^i)  +[- K^\star \tilclX_k  +  G^\star  r]_i
 \\
& =  \fee^0(u^i\mid \st^i)  \Bigl[  1 +  \frac{1}{\fee^0(u^i\mid \st^i)}  \Bigl(   - [K^\star \tilclX_k]_i  +  G^\star_i  r  \Bigr)  \Bigr]
 \end{aligned} 
 \]
This implies the small signal approximation \eqref{e:SmallSignalPolicy}.
It is conjectured that  \eqref{e:SmallSignalPolicy} is within $O(r^2)$ of optimal (in terms of the objective in \eqref{e:IPDQ}).

\subsection{Minimum Principle approach}
\label{s:ACOE-IPDQ}

As previously observed,
the optimization problem \eqref{e:IPDQ} falls outside of traditional MDP theory:
\begin{romannum}
\item The control cost  is absent, and is replaced by a cost on the randomized policy. 
\item 
A quadratic cost on $\pi$ appears, rather than linear as anticipated in the LP formulations of MDPs.   
\end{romannum}
An MDP model is constructed here through a series of steps, with the first step addressing (i).  
For this it is natural  to view the input as an element of the simplex $\clS(\U)$.
This is not the same setting as \Cref{s:IPDQ_HJB}:   in this subsection,
the notation $ \fee(\varble \mid \st)$ is interpreted as static state feedback from state $\st$ to input $ \fee(\varble \mid \st)  \in \clS(\U)$.

To remove the quadratic cost on $\pi$ requires a Lagrangian relaxation, similar to what was used in \Cref{s:klq}.  For $\lambda\in\Re$ denote
\begin{equation}
[\pi^{\lambda} ,\fee^{\lambda} ,\gamma^\lambda] =\argmin_{\pi,\fee,\gamma  }  \bigl\{  \clD(\fee )   
				+ \frac{\kappa}{2}   \gamma^2   + \lambda [ \gamma - \lgl \pi, \outpt \rgl + r ]     :   \pi P_\fee = \pi  \bigr\}
\label{e:IPDQrelax}
\end{equation}
For each $\lambda$ this is viewed as a standard average cost optimal control problem with state process $\bfmS$.
The controlled transition matrix  and cost function are defined by
\[
\begin{aligned}
T_\upmu(\st,\st')   &\eqdef   \sum_u \upmu(u) T((u,\st), \st')\,,\qquad    &&\st,\st'\in\S\, ,\quad  \upmu \in \clS(\U) \,,
\\
c(s, \upmu)  &\eqdef    
\sum_u \upmu(u) \Bigl[     \log\Bigl(\frac{\upmu(u)}{\fee^0(u\mid \st)}  \Bigr) -\lambda \outpt(\st,u)   \Bigr] &&\st\in\S\, ,\quad  \upmu \in \clS(\U) \, .
\end{aligned} 
\]

Under any policy $\fee \in \Upphi$ the resulting process $\bfmS$ is Markovian.   With a slight abuse of notation, its transition matrix is denoted 
\[
T_\fee(\st,\st')   \eqdef   \sum_u \fee(u\mid \st) T((u,\st), \st')\
\]
and the cost as a function of $\st$ under this policy is denoted   
\[
 c_\fee(\st)  =   \sum_u \fee(u\mid \st) c(s, \fee(u \mid \st) ) =   \sum_u \fee(u\mid \st) 
\Bigl[     \log\Bigl(\frac{\fee(u\mid \st)}{\fee^0(u\mid \st)}  \Bigr) -\lambda \outpt(\st,u)   \Bigr] \,, \quad \st\in\S\, .
\]
The solution to \eqref{e:IPDQrelax} gives $\gamma^\lambda= -\lambda/\kappa$ and
\begin{equation}
[\pi^{\lambda} ,\fee^{\lambda} ] =\argmin_{\spmf,\fee  }  \Bigl\{    \sum_\st  \spmf(\st) c_\fee(\st)     :   \spmf T_\fee = \spmf  \Bigr\}
\label{e:IPDQrelaxB}
\end{equation}
This is a standard MDP formulation, in which the optimization over feedback laws $\fee$ is explicit.

The Lagrange multiplier $\lambda$ is treated as the independent parameter rather than $r$.   This is justified through the correspondence $\gamma^\lambda= -\lambda/\kappa$, and the following definition imposes complementary slackness  
\begin{equation}
r^\lambda=  -\gamma +  \lgl \pi^\lambda, \outpt \rgl   =    \lgl \pi^\lambda, \outpt \rgl +  \lambda/\kappa
\label{e:rCS}
\end{equation}
As $\lambda$ ranges from $-\infty$ to $+\infty$,   so do the values of $r^\lambda$ because $  \lgl \pi^\lambda, \outpt \rgl $ is bounded and continuous in $\lambda$.    

Continuity of  $  \lgl \pi^\lambda, \outpt \rgl $ and other conclusions are obtained from prior research (in particular   \cite{busmey18a}),  because the   optimization problem \eqref{e:IPDQrelaxB} is identical to the IPD optimization problem \eqref{e:IPD}, in which $\zeta$ is replaced by $\lambda$.  

To match the setting of \cite{busmey18a}, denote the one-step reward as the negative of cost, 
$\varrho(s,\fee) =-c(s,\fee)     $.   
Based on the foregoing, the solution to \eqref{e:IPDQrelaxB}
 is characterized by the average reward optimality equation
\begin{equation}
\xi^\lambda +  h^\lambda(\st) = \max_\fee\bigl \{ \varrho(\st,\fee)  +  \sum_{\st'}  T_\fee(\st,\st') h^\lambda(\st') \bigr \}
\label{e:ACOE-IPDQ}
\end{equation}
The maximizer provides a representation for the optimal policy similar to \eqref{e:phistar1}:
\begin{equation}
\fee^\lambda(u\mid \st) 
=
\fee^0(u\mid \st) \exp\bigl( \barh^\lambda (\st,u)  + \lambda \outpt(\st,u)- \Gamma^\lambda(\st) \bigr)\,,
\label{e:phistarIPDQ}
\end{equation}
with $\barh^\lambda(x) = \sum_{u'} T(x , \st')  h^\lambda(\st')$ and $\Gamma^\lambda(\st) $ the normalizing factor,
\begin{equation}
\Gamma^\lambda(\st) = \log \sum_u  \fee^0(u\mid \st) \exp\bigl( \barh^\lambda (\st,u)  + \lambda \outpt(\st,u) \bigr)\,.
\label{e:Gamma-lambda}
\end{equation}

\wham{ODE solution}   The reader is referred to \cite{busmey18a} for full details on this solution technique to compute the solution to \eqref{e:ACOE-IPDQ}.    The main ideas are recalled here, in part because they are required in a small signal approximation.   

It is shown in this prior work that the relative value functions can be constructed so that they are continuously differentiable in $\lambda$.   Letting $H^\lambda =  \frac{d}{d\lambda } h^\lambda$, the following is obtained:
\[
\baroutpt^\lambda
+  H^\lambda(\st) =    \outpt_\st  +  \sum_{\st'}  T_\lambda(\st,\st') H^\lambda(\st')    \,,
\quad \st\in \S\,, \   \lambda\in\Re\,,
\]
in which  $   T_\lambda =  T_{\fee^\lambda}$,
\begin{equation}
 \outpt_\st \eqdef  \frac{d}{d\lambda }  \varrho(s,\fee) = \sum_u \fee(u\mid \st) \outpt(\st,u)  
 \,
 \quad \textit{and}\quad
 \baroutpt^\lambda = \frac{d}{d\lambda }  \xi^\lambda
\label{e:PoissonElementsY}
\end{equation}
  This fixed point equation is known as Poisson's equation,  whose solution is often expressed $H^\lambda = Z_\lambda \outpt$ with $Z_\lambda$ known as the fundamental matrix  (obtained as a simple matrix inverse).
Also obtained is  
\begin{equation}
\baroutpt^\lambda  = \sum_x  \pi^\lambda(x)   \outpt(x)
\label{e:barY}
\end{equation}
where $\pi^\lambda (\st,u) = \spmf^\lambda(\st) \fee^\lambda(u\mid \st)$,  with $\spmf^\lambda$  the unique invariant pmf  for $T_{\lambda}$.

This defines the ODE solution for the family of relative value functions
\begin{equation}
\frac{d}{d\lambda} h_\lambda =  Z_\lambda \outpt  
\label{e:ODEsoln}
\end{equation}
with boundary condition $h^\lambda\equiv 0$ when $\lambda=0$.   The right hand side depends on $h_\lambda$ through $  Z_\lambda$,  but the dependency is smooth.

\wham{Small signal approximation}

The small signal approximation here is defined in a setting similar to 
\Cref{t:IPDQ-LQR}:  it is assumed that the reference signal is small in magnitude, and  that $r\equiv 0$ achieves zero cost in  \eqref{e:IPDQ}.   This holds if $\sum\pi^0(x) \outpt(x) =0$, which will be assumed henceforth.

A slight change in notation is required here, as compared to \Cref{s:IPDQ_HJB}:
 $ \tilX_k  $  and $ \tilU_k$ are $n$-dimensional column vectors that denote the exact deviation:
$ \tilX_k^i \eqdef  \tilnu_k(x^i)$   and  $ \tilU_k^i =\tilfee_k(u^i \mid \st^i)$ for each $i$ and $k$.  
The approximation requires the following notation:
\begin{romannum}

\item
$\displaystyle
\varsigma^2_\lambda=  \frac{d^2}{d\lambda^2} \xi^\lambda $ for $ \lambda\in\Re$.

\item  $\barH^\lambda = \frac{d}{d\lambda} \barh^\lambda$,   
\
\
   $\displaystyle 
\barH^\lambda_\st =  \sum_u  \fee^0(u\mid \st) \barH^\lambda(\st,u)$,    
\
\
$\displaystyle 
\outpt_\st =  \sum_u  \fee^0(u\mid \st) \outpt(\st,u)$.

\item $\Uplambda(x) =\barH^0(x) + \outpt(x)  -   ( \barH^0_\st + \outpt_\st) $,
\
\
 $\UplambdaN (x) = \bigl(  \varsigma^2_0 + 1/ \kappa \bigr)^{-1} \Uplambda (x) $. 
 \end{romannum}

Approximation of the state dynamics begins with an approximation of the input.   The proof of \Cref{t:LQRbLemma} is postponed to \Cref{app:IPDQ}.   
\begin{lemma}
\label[lemma]{t:LQRbLemma}
The small-$r$ approximation holds for the solution to IPD-Q:
 \begin{equation}
  \tilfee (u\mid s,r)  =    \fee^0 (u\mid \st)  \exp\bigl(  \UplambdaN(x) r  \bigr) + O(r^2)
\label{e:tilfee}
\end{equation}
\qed
\end{lemma}

The following linear systems approximation follows easily from  \Cref{t:LQRbLemma}.

\begin{proposition}
\label[proposition]{t:IPDQ-LQRb} 
Suppose that the input  $\fee_k(u\mid \st) = \fee^\star(u\mid \st, r_k)$ is applied to the nonlinear system \eqref{e:IPDQ-global}.   The closed loop dynamics then admit   the approximation 
 \begin{equation}
\tilX_{k+1}  =  A \tilX_k  +   B  G^\star r_k       +  \epsy_k + O(r_k^2)
\label{e:mfmXr}
\end{equation}
in which $A$ and $B$ are defined in \Cref{t:IPDQ-LQR},  
$ G^\star$ is the column vector   with entries  $ G^\star_i  =  \fee^0 (u^i\mid \st^i)  \UplambdaN(x^i)$, 
and $\epsy_k$ is quadratic in the deviation $(\tilnu_k , \tilfee_k)$:
\[
 \epsy_k^i  = \sum_j  \tilnu_k(x^j)  T(x^j, \st^i) \tilfee_k(u^i \mid \st^i)\,,\qquad x^i = (\st^i,u^i)\in\state
\]
\qed
\end{proposition}

\section{Applications to Demand Dispatch}
\label{s:num}

An application of the control framework described in the previous sections is \textit{Demand Dispatch}: an evolving science for automatically controlling flexible loads to help maintain supply-demand balance in the power grid. The goal of demand dispatch (DD) is to modify the behavior of flexible loads such that the aggregate power consumption tracks a reference signal that is broadcast by a balancing authority (BA).

Keep in mind that in the numerical examples here we focus entirely on the mean-field model.     We know from prior work that evolution of the empirical distributions does closely track this idealization:  for reasonably large $\clN$,   following the notation \eqref{e:empDist}, the approximation $\nu^{\, \clN}_k  \approx \nu_k$ holds and the covariance of the error grows slowly with $k$ (error is reduced with feedback \cite{YueChenThesis16,chebusmey17a}).   
Although the   control architecture in this prior work is very different, it should not surprise the reader that the law of large numbers and associated central limit theorem hold in the setting of this paper.

Also, the numerical results here focus entirely on the solutions surveyed in \Cref{s:klq}.   As explained in \Cref{s:IPD-Q}, the
 IPD-Q solution for real-time feedback reduces to something similar to what has been extensively explored in prior work    \cite{YueChenThesis16,chebusmey17a}.


Although these techniques can be applied to any flexible load, the experiments in this section demonstrate distributed control of a population of residential water heaters or refrigerators.   An MDP model is constructed in which the state is the standard used to capture hysteresis control,  $	S_k = (\theta_k, U_{k-1})$,
in which $\theta_k\in\Re$ is the temperature, and
$U_k \in \{0,1\}$ denotes power mode for each $k$.  
Remember the physical system operates in continuous time, and $k$ represents the $k$th sampling time.    This means that  $U_{k-1}$ represents the power mode during the   sampling interval ending at the $k$th sampling time.


\begin{figure}[h]
	\centering
	\includegraphics[width=.6\hsize]{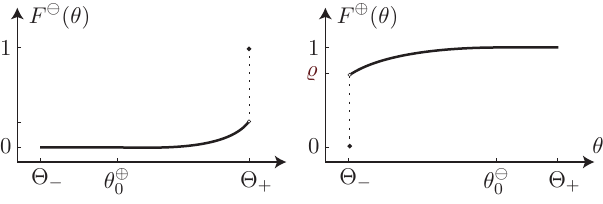}
	
	\caption{\small
	Nominal model for a water heater.} 
	\label{f:WH}
\end{figure}

\subsection{Designing the nominal model}

Construction of the nominal model with transition matrices $\{P^0_k\}$ of the form \eqref{e:nomP} requires specification of dynamics of nature and the nominal policy.     In the case of water heaters, the sequence of transition matrices  $\{T_k\}$ for nature were based on  input-output data obtained from Oak Ridge National Laboratories \cite{chehasmatbusmey18}. 
For refrigerators, $T$ was taken independent of $k$,  constructed based on simulations of the standard linear TCL model:
\begin{equation}
	\theta_{k+1} = \theta_k + \alpha [\theta^a - \theta_k] - \beta U_k + D_{k+1}  \,, 
\label{e:StateTCL_dist}
\end{equation}
in which $\alpha, \beta>0$, $\theta^a$ denotes the (time-invariant) ambient air temperature, and the disturbance process $\bfmD$ captures modeling error and usage.        

In all cases the nominal policy was chosen time-homogeneous: $\fee^0_k \equiv \fee^0$ is a fixed randomized policy, designed to approximate   deterministic hysteresis control.  We describe the construction for water heaters, following \cite{meybarbusyueehr15, chehasmatbusmey18}.   

The upper and lower temperature limits that define a deadband are denoted $\Theta_-$, $\Theta_+$, respectively. A standard residential water heater switches deterministically when it reaches the limits, but $\fee^0$ is constructed so that the power mode will switch stochastically, often before reaching one of the two limits. The randomized decision rule is represented by two CDFs: $F^\oplus$ is the CDF for the temperature at which power turns on, and $F^\ominus$ is the CDF for the temperature at which power turns off.

A particular design for these CDFs, shown in \Cref{f:WH}, is obtained on fixing parameters $\theta_0^\oplus, \theta_0^\ominus \in [\Theta_-,\Theta_+]$, $\varsigma\in(0,1)$ and $\eta > 1$:
\[
\begin{aligned}
F^\ominus(\theta) &= \varsigma(\theta - \theta_0^\ominus)_+^\eta,
\\ 
F^\oplus(\theta) &=  1-  \varsigma(\theta_0^\oplus - \theta)_+^\eta
\,, \qquad \theta \in [\theta_-,\theta_+)  \,.
\end{aligned}
\]
Without loss of generality it is assumed that the sampling interval is 1~unit. At time instance $k$, the decision rule is:
\begin{romannum}
	\item If $U_k = 0$ then $U_{k+1} = 1$ with probability
$\displaystyle
	\fee^0_k(1 | \st) = \frac{[F^\oplus(\theta_{k-1}) - F^\oplus(\theta_k)]_+}{F^\oplus(\theta_{k-1})}
$.
	
	\item If $U_k = 1$ then $U_{k+1} = 0$ with probability
$\displaystyle
	\fee^0_k(0 | \st) = \frac{[F^\ominus(\theta_k) - F^\ominus(\theta_{k-1})]_+}{1-F^\ominus(\theta_{k-1})}
$.
\end{romannum}

\begin{figure}
	\centering 
		\centering
		\includegraphics[width=0.9\hsize]{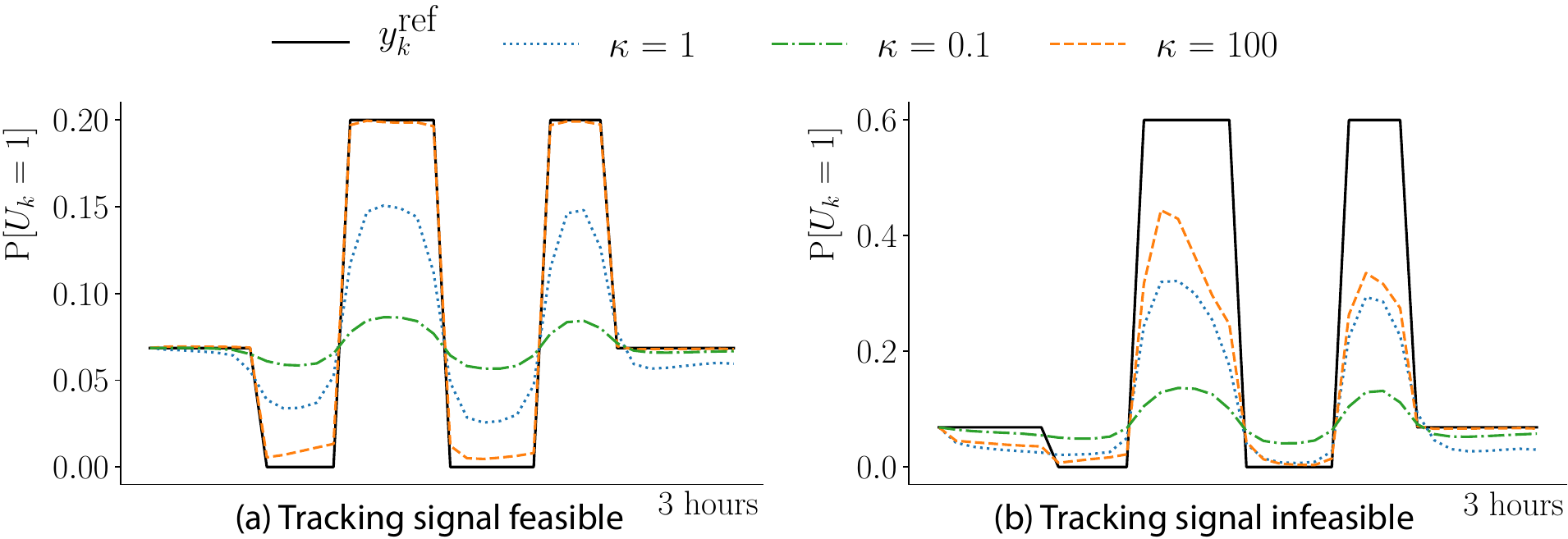}

		\caption{\small
	Evolution of $\clN \langle \nu_k, \outpt \rangle$:  (a)  reference signal is feasible;  (b) reference signal is  infeasible.}
		\label{f:wh_square_both}		
	 
\end{figure}

\subsection{Tracking}

In practical applications the  aggregate power is of interest, which is approximated by $\pwr\clN y_k$ at time $k$,  where $\pwr$ is the rated power of a single load. Hence the total population size $\clN$ must be taken into account in any tracking problem. 
 In plots that follow, we choose to focus on the ``normalized'' response, defined as follows:
\begin{equation*}
		y^{\textrm{ref}}_k = r_k/\pwr \, , \qquad \qquad
		\hat{y}^{\textrm{ref}}_k = \har_k/\pwr \, , \qquad \qquad
		y_k = \langle \nu_k, \outpt \rangle/\pwr 
		\label{e:util}
\end{equation*}
  In this context, $y_k$ can be interpreted as the probability of a load being on.

\begin{wrapfigure}[16]{L}[-1pt]{0.45\textwidth}
		\centering
		\includegraphics[width=0.95\linewidth]{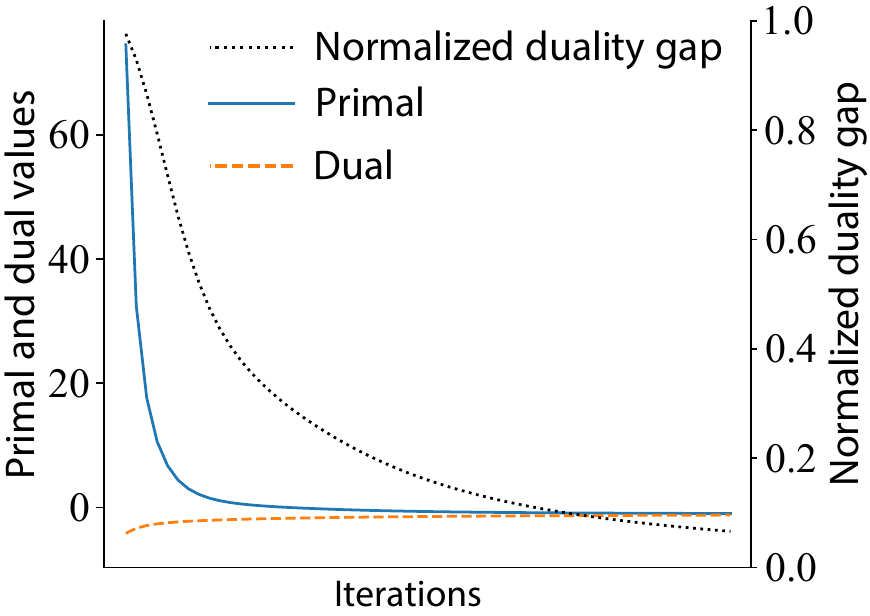}
		\caption{\small
	The normalized duality gap approaches zero in each experiment.}
		\label{f:DG}
\end{wrapfigure}

The two sets of plots in \Cref{f:wh_square_both} are distinguished by the reference signal.   In each case the reference signal is a square wave.   In (a) the signal is feasible, and in (b) it violates the energy limits of the collection of water heaters \cite{haosanpoovin15}. In \Cref{f:wh_square_both}~(a) it is seen that tracking is nearly perfect for sufficiently large $\kappa$.   Tracking of the larger reference  signal would require temperature deviations to exceed the deadband of the water heater. Instead, we observe in \Cref{f:wh_square_both}~(b) a graceful truncation of the reference signal.

The next experiment utilizes a subspace relaxation, via a Fourier transformation, to demonstrate the potential to reduce computational complexity. One weakness of this approach is the introduction of undesirable oscillations in the transformed reference signal, as shown in \Cref{f:tri}. The authors are currently investigating alternative transform techniques. \Cref{f:DG} displays a result typical of all of our experiments: the normalized duality gap\footnote{the duality gap divided by the value of the primal} 
tends towards zero.

\Cref{f:hist-evo} displays the results of a tracking experiment comparing six different initial conditions. Observe how their marginal distributions become nearly identical within a few hours. Recall that this control problem requires knowledge of the initial distribution $\nu_0$. These results suggest that an accurate estimate of the global marginal distribution can be readily available at each load. This has interesting implications for control design;  see \Cref{s:con_arc} for further discussion.

\subsection{Sensitivity}
The next set of experiments was designed to assess sensitivity of KLQ optimal control to modeling error. 
Specifically, what are the consequences of ignoring the randomness of nature?

A particular choice of statistics for \eqref{e:StateTCL_dist} was chosen in order to mimic the effect of a refrigerator door opening at random times throughout the day:  $\bfmD$ is i.i.d., with
\begin{equation*}
	D_{k+1} = \begin{cases} 
		\bar{d} & \text{with probability} \quad \epsy  \\
		0 & \text{with probability} \quad 1 - \epsy
	\end{cases}
\end{equation*}
where $\epsy$ determines the average amount of door openings per day, and $\bar{d}$ was chosen so that the temperature inside the refrigerator increases when the door is open even when the power mode is on.   A deterministic approximation of \eqref{e:StateTCL_dist} was constructed for comparison, in which $D_{k+1}$ is replaced by its mean:
\begin{equation}
	\theta_{k+1} = \theta_k + \alpha [\bar{\theta}^a - \theta_k] - \beta U_k
	\label{e:StateTCL_prt}
\end{equation}
with $\bar{\theta}^a =\theta^a + \tilde{\theta}^a$ with $\tilde{\theta}^a = \bar{d} \epsy / \alpha$. 

Optimal policies were calculated for each of three models: the stochastic model \eqref{e:StateTCL_dist},  its deterministic approximation
\eqref{e:StateTCL_prt},   and the cruder deterministic approximation obtained on setting $D_{k+1}\equiv 0$ in  \eqref{e:StateTCL_dist}   (equivalently, 	\eqref{e:StateTCL_prt} with  $\bar{\theta}^a =\theta^a $). Each policy was then tested on the stochastic model \eqref{e:StateTCL_dist}.

 \Cref{f:eps5+eps100} displays the results from these experiments, where in each plot
\begin{itemize}
	\item $y_k^\text{ref}$ is the reference signal
	\item $y_k$ is the policy that is optimal for the stochastic model
	\item $\tilde{y}_k$ is the policy that is optimal for \eqref{e:StateTCL_prt}
	\item $\bar{y}_k$ is the policy that is optimal  for \eqref{e:StateTCL_prt} using $\bar{\theta}^a =\theta^a $.
\end{itemize}
The accurate tracking $y_k \approx y^{\textrm{ref}}_k$ is expected because this reference signal is feasible, and  $\kappa>0$ was chosen to be large.
 
It is seen in \Cref{f:eps5+eps100}~(a) that all four trajectories are nearly identical for the smaller disturbance. The deviation is far greater in (b), for which the disturbance is   greater. However, $y_k$ and $\bar{y}_k$ are nearly identical for about the first 30 minutes. This suggests that a deterministic approximation, combined with model predictive control, may be used in place of the stochastic model.

\begin{figure}[h]
	\centering
	\includegraphics[width=.95\linewidth]{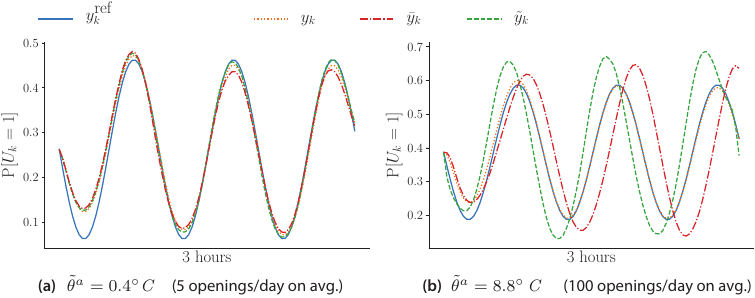}	
	\caption{\small
	Sensitivity experiments support the use of MPC with a deterministic approximation for randomness from nature.}
	 \label{f:eps5+eps100} 
\end{figure}

\subsection{Information architectures} 
\label{s:con_arc}

The choice of information architecture is an interesting topic for future research.   Here are three possibilities:  
\begin{romannum}
	\item \textit{Smart BA}: The BA uses the reference signal $\{r_k\}$ and its estimate of $\nu_0^0$ to compute $\lambda^\star$ and broadcast it to the loads.
	
	\item \textit{Smart Load}: The BA broadcasts $\{r_k\}$ to the loads. Each load computes $\lambda^\star$ based on its internal model and $\nu_0^0 = \delta_{x_0}$, with $x_0 \in \state$ its current state.
	
	\item \textit{Genius Load}: The BA broadcasts $\{r_k\}$ to the loads. Each load computes $\lambda^\star$ based on its internal model and its estimate of $\nu_0^0$.
\end{romannum}

Each approach has its strengths and weaknesses. Approaches (i) and (iii) require knowledge of the initial marginal pmf of the population, $\nu^0_0$. If a perfect estimate is assumed, then the total cost in cases (i) and (iii) is equal to $J^\star(\nu_0^0)$. But, how can a load estimate the marginal pmf of the population? Recall the coupling shown in \Cref{f:hist-evo}:
	 the histograms are nearly identical after about three hours, regardless of the initial condition. If enough time has passed since the latest MPC iteration,  the pmfs $\{\nu_k \}$ computed locally can be used to approximate the marginal pmf of the population (perhaps smoothed using the techniques of \cite{YueChenThesis16,chebusmey17a}).  

In contrast, the total cost for case (ii) is the sum,
\[
\sum_{i=1}^d \nu_0^0(x^i)  J^\star(\delta_{x^i})
\]
since each load optimizes according to its own initial state, $x^i$. Even when the aggregate can easily track $\{r_k\}$, the cost $J^\star(\delta_{x^i})$ may be very large for individuals that are at odds with the reference signal. For example, an increase in power consumption could be requested while a water heater is near its upper temperature limit and must turn off. 
So,  it is possible that approach (ii) will impose greater stress on the loads as compared to the other two options, or will lead to reduced capacity.

\section{Conclusions}
\label{s:conc}


The paper provides a complete theory for KLQ and infinite-horizon counterparts, without the restriction to deterministic dynamics imposed in  \cite{cambusjimey19,cheche17b}.
Plans for future research   include:
\begin{romannum}

	\item   Monte-carlo approaches for both KLQ and IPD-Q.     The approximation \eqref{e:SmallSignalPolicy} invites actor critic methods for approximating the best coefficients  $\{K^\star_{i,j}  , G^\star_i \}$ based on training data with non-constant reference signal,   rather than a small signal approximation.


	\item Evaluate robustness and sensitivity to other types of modeling error.

	\item Investigate alternative transform techniques.
	
	
	\item Consider other cost functions,  such as the Wasserstein distance.   Preliminary results are summarized in \cite{corbusmey23}.
	
	\item Investigate the relationship between optimality and coupling of the pmfs, and the implications to control design.
	
	\item  Careful design of a terminal cost function may result in better performance for smaller time horizons \cite{chemey99a}.

	\item  How is the relative value function $\clH$ appearing in \eqref{e:KLQACOE} related to $h^\lambda$ appearing in \eqref{e:ACOE-IPDQ}  (with $\lambda=\lambda_r$)?

\end{romannum}

\appendix

 \bigskip

\centerline{\bf \Large Appendix}
\section{Convexity} \label{app:cvx}

The following is one step in the proof of \Cref{p:convexity}.
\begin{lemma}
\label[lemma]{t:clDconvex}
The function \eqref{e:KLrate} can be expressed as a convex function of the marginal:
\[
\clD(\nu_k ,\nu_k^0)  =  D(   \nu_k  \|  \nu_k^d)
\]
where $D$ denotes relative entropy, and  $\nu_k^d$   a linear function of $\nu_k$,
\[
\nu_k^d (\st,u) = \fee_k^0(u\mid \st)   \sum_{u'}  \nu_k(\st,u' )
\]
\end{lemma}

\begin{proof}
Let $\spmf_k$ denote the distribution of $S_k$; that is,  $\spmf_k(\st) =  \sum_{u'}  \nu_k(\st,u' )$ for each $\st\in\S$.      Bayes' rule gives $\nu_k(\st,u) = \fee_k(u\mid \st) \spmf_k(\st)$  for each $\st$, $u$, and the desired identity follows:
\[
\clD(\nu_k ,\nu_k^0)  =
 \sum_{\st,u}   \nu_k(\st,u) \log\Bigl(\frac{\fee_k(u\mid \st) \spmf_k(\st) }{ \fee_k^0(u\mid \st) \spmf_k(\st) }\Bigr) =D(   \nu_k  \|  \nu_k^d)
\]
The function $\clD$ is convex because the relative entropy $D$ is jointly convex in its two arguments  \cite{demzei98a} or \cite[Section 1.5]{pri10}.
\end{proof}

\begin{proof}[Proof of \Cref{p:convexity}]
Convexity of the objective  \eqref{e:primal} in the variables ($\nu_k$, $\gamma_k$)  holds because it is the sum of convex functions  (convexity in the marginals $\{ \nu_k\}$ is established in \Cref{t:clDconvex}).

	It is shown next that the constraint \eqref{e:dynamics_constraint} characterizes the dynamics \eqref{e:mfm}. By definition, the constraint \eqref{e:dynamics_constraint} can be expressed as
	$$
	\Prob\{S_k = \st'\} = \sum_{\st,u} \Prob\{S_{k-1} = \st, U_{k-1}=u\} \Prob\{S_k = \st' \mid S_{k-1} = \st, U_{k-1}=u\}
	$$
	Multiplication by $\fee_k(u' \mid \st')$ yields
	\begin{equation}
		\begin{aligned}
			\Prob\{ &S_k=\st', U_k=u'\} 
			\\
			&= \sum_{\st,u} \Prob\{S_{k-1} = \st, U_{k-1}=u\} \Prob\{S_k = \st', U_k=u' \mid S_{k-1} = \st, U_{k-1}=u\}
		\end{aligned} 
	\end{equation}
	which is identical to \eqref{e:mfm}. The proof of the implication \eqref{e:mfm} $\Longrightarrow$ \eqref{e:dynamics_constraint} is direct.  
\end{proof}

\section{Convex duality of relative entropy}
\label{app:A}
 
The proofs of \Cref{t:dualFunctional} and \Cref{p:algo} make use of the following four lemmas. The first is based on a well known result regarding relative entropy. For any function $h\colon \state^{K+1} \to \Re$ denote
\begin{equation}
\L^0(h)\eqdef \sup_{\bfmp} \bigl \{ \langle \bfmp, h \rangle - D(\bfmp\|\bfmp^0) \bigr\}
\label{eq:dualD}
\end{equation}

\begin{lemma}[Convex dual of relative entropy]
\label{lm:dualkullback} 
For each $\bfmp^0\in \clS(\state^{K+1})$ and function $h \colon \state^{K+1} \to \Re$, the (possibly infinite) value of \eqref{eq:dualD} coincides with the log moment generating function:
$$
\L^0(h) = \log \lgl \bfmp^0, e^h \rgl 
$$
Moreover, provided $\L^0(h) <\infty$, the supremum in \eqref{eq:dualD} is uniquely attained with $\bfmp^\star = \bfmp^0\exp(h-\L^0(h))$. That is, the log-likelihood $L^\star = \log(d\bfmp^\star /d\bfmp^0)$ is given by
$$
L^\star(\vx) = h(\vx) - \Lambda^0(h)
$$
\qed 
\end{lemma}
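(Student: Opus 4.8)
The plan is to establish this Gibbs variational principle by ``completing the square'' around the candidate maximizer. Since $\state^{K+1}$ is finite and $h$ is real-valued, the normalizing constant $Z \eqdef \langle \bfmp^0, e^h\rangle = \sum_{\vx} \bfmp^0(\vx)e^{h(\vx)}$ is finite and strictly positive, so I would define the tilted pmf $\bfmp^*(\vx) = \bfmp^0(\vx)e^{h(\vx)}/Z$, which is a genuine pmf and is exactly the claimed optimizer once one identifies $\log Z = \L^0(h)$. The entire argument then reduces to a single algebraic identity that re-expresses the objective in terms of the relative entropy measured against $\bfmp^*$ rather than $\bfmp^0$.

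Concretely, for any pmf $\bfmp$ supported on the support of $\bfmp^0$ (any other $\bfmp$ gives $D(\bfmp\| \bfmp^0)=+\infty$ and may be discarded), I would substitute the definition of $\bfmp^*$ to obtain $\log(\bfmp/\bfmp^*) = \log(\bfmp/\bfmp^0) - h + \log Z$, and then take the $\bfmp$-average to get
\begin{equation*}
D(\bfmp\| \bfmp^*) = D(\bfmp\| \bfmp^0) - \langle \bfmp, h\rangle + \log Z .
\end{equation*}
Rearranging yields $\langle \bfmp, h\rangle - D(\bfmp\| \bfmp^0) = \log Z - D(\bfmp\| \bfmp^*)$. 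Since relative entropy is non-negative and vanishes precisely when its two arguments coincide, the right-hand side is at most $\log Z$, with equality if and only if $\bfmp=\bfmp^*$. Taking the supremum over $\bfmp$ then gives $\L^0(h)=\log Z = \log\langle \bfmp^0, e^h\rangle$, attained uniquely at $\bfmp^*$; and reading off $\log(\bfmp^*/\bfmp^0) = h - \log Z = h - \L^0(h)$ produces the stated form of $L^*$.

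Because the ambient space is finite, there is essentially no analytic obstacle: all sums converge and $\bfmp^*$ is well defined. The only points requiring care are bookkeeping. First, I must justify restricting the supremum to $\bfmp$ absolutely continuous with respect to $\bfmp^0$, which is immediate from the $+\infty$ convention for $D$ on non-absolutely-continuous pairs. Second, to honor the ``possibly infinite'' phrasing (pertinent only if one later drops finiteness of $\state^{K+1}$ or allows $h$ to be extended-real-valued), I would note that the displayed identity holds verbatim whenever $Z<\infty$, covering the finite case, while if $Z=+\infty$ one shows $\L^0(h)=+\infty$ directly by testing the objective against the normalized pmfs proportional to $\bfmp^0 e^{h\wedge c}$ and letting the truncation level $c\to\infty$. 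This truncation limit is the only step that is more than routine, and it is entirely unnecessary in the finite setting that the paper actually uses.
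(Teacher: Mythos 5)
Your proof is correct and is exactly the standard Gibbs variational / Donsker--Varadhan argument: tilt $\bfmp^0$ by $e^h$, rewrite the objective as $\log\lgl \bfmp^0, e^h\rgl - D(\bfmp\|\bfmp^*)$, and invoke non-negativity and strict positivity of relative entropy for uniqueness. The paper states this lemma without proof as a ``well known result,'' and the argument it is implicitly invoking is precisely the one you give, so there is nothing substantive to compare; your extra care about absolute continuity and the $Z=+\infty$ case is harmless and, as you note, unnecessary in the finite setting used here.
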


\begin{lemma}
\label{lm:dual_lambda_h}
The dual function can be expressed
\begin{equation}
	\varphi^\star(\lambda,g) = \lambda^T \har - \frac{1}{2\kappa} \|\lambda\|^2 - \lgl \nu_0^0, \Tg^\lambda_1 \rgl + \sum_{k=1}^{K} \min_\st \Big[g_k(\st) - \clT^\lambda_k(g_{k+1};\st) \Big]
\label{e:DualFormulaFinal}
\end{equation}
where $g_{K+1} \equiv 0$.
\end{lemma}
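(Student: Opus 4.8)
The plan is to carry out the inner minimization $\varphi^*(\lambda,g)=\min_{\nu,\gamma}\clL(\nu,\gamma,\lambda,g)$ in \eqref{e:Lagrangian} by exploiting that the Lagrangian separates across the decision variables. First I would dispose of the $\gamma$-block: the variables $\{\gamma_n\}$ enter only through $\frac{\kappa}{2}\sum_n\gamma_n^2+\sum_n\lambda_n\gamma_n$, an unconstrained strictly convex quadratic, so completing the square gives the minimizer $\gamma_n=-\lambda_n/\kappa$ and value $-\frac{1}{2\kappa}\|\lambda\|^2$. The reference term $\sum_n\lambda_n\sum_k w_n(k)r_k$ collapses to $\lambda^T\har$ by the definition of $\har$, and using $\sum_n\lambda_n w_n(k)=\chlambda_k$ from \eqref{e:chlambda} the linear-in-$\nu$ output penalty becomes $-\sum_k\chlambda_k\lgl\nu_k,\outpt\rgl$. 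This produces the first two terms of \eqref{e:DualFormulaFinal} and reduces the task to minimizing over $\nu$ the functional
$$
F(\nu)=\sum_{k=1}^{K}\clD(\nu_k,\nu_k^0)-\sum_{k=1}^{K}\chlambda_k\lgl\nu_k,\outpt\rgl+[\text{dynamics}].
$$

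Next I would reorganize the dynamics contribution so that it decouples across $k$. Writing $\Tg^\lambda_k(x)=\sum_{\st'}T_u(\st,\st')g_k(\st')$ for $x=(\st,u)$ (the operator of \eqref{e:Gamma} applied to the current multiplier $g_k$), this term equals $\sum_{k=1}^{K}\lgl\nu_k,g_k\rgl-\sum_{k=1}^{K}\lgl\nu_{k-1},\Tg^\lambda_k\rgl$. Shifting the index in the second sum and invoking the convention $g_{K+1}\equiv0$ (hence $\Tg^\lambda_{K+1}\equiv0$) telescopes this into $-\lgl\nu_0,\Tg^\lambda_1\rgl+\sum_{k=1}^{K}\lgl\nu_k,g_k-\Tg^\lambda_{k+1}\rgl$. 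Because the initial marginal $\nu_0=\nu_0^0$ is a fixed datum, the term $-\lgl\nu_0,\Tg^\lambda_1\rgl$ is constant and pulls out as the third term of \eqref{e:DualFormulaFinal}. What remains is a sum of $K$ mutually independent minimizations, the $k$-th being $\min_{\nu_k\in\clX}\big[\clD(\nu_k,\nu_k^0)+\lgl\nu_k,\,g_k-\Tg^\lambda_{k+1}-\chlambda_k\outpt\rgl\big]$.

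The crux is this per-$k$ problem, which I would solve by factoring $\nu_k$ into its $\S$-marginal $\spmf_k(\st)=\sum_{u}\nu_k(\st,u)$ and its conditional $\phi_k(\cdot\mid\st)$. Using the identity $\clD(\nu_k,\nu_k^0)=\sum_{\st}\spmf_k(\st)D(\phi_k(\cdot\mid\st)\|\phi_k^0(\cdot\mid\st))$ established in the proof of \Cref{p:convexity}, and noting that $g_k(\st)$ is independent of $u$, the objective takes the form $\sum_{\st}\spmf_k(\st)\big[g_k(\st)+D(\phi_k(\cdot\mid\st)\|\phi_k^0(\cdot\mid\st))-\sum_u\phi_k(u\mid\st)(\Tg^\lambda_{k+1}(\st,u)+\chlambda_k\outpt(\st,u))\big]$. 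For each fixed $\st$ the inner minimization over the conditional law $\phi_k(\cdot\mid\st)$ is precisely the convex dual of relative entropy; by \Cref{lm:dualkullback} its value is $-\log\sum_u\phi_k^0(u\mid\st)\exp(\sum_{\st'}T_u(\st,\st')g_{k+1}(\st')+\chlambda_k\outpt(\st,u))=-\clT^\lambda_k(g_{k+1};\st)$, by the very definition of $\clT^\lambda_k$ (and the minimizing conditional is exactly the tilted policy of \eqref{e:phi-h-Lambda}). Substituting back leaves the linear program $\min_{\spmf_k}\sum_{\st}\spmf_k(\st)\,[\,g_k(\st)-\clT^\lambda_k(g_{k+1};\st)\,]$ over the probability simplex on $\S$, whose value is $\min_\st[g_k(\st)-\clT^\lambda_k(g_{k+1};\st)]$. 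Summing over $k$ yields the final term of \eqref{e:DualFormulaFinal}.

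The step I expect to require the most care is the two-stage decoupling of the $\nu_k$-minimization: that one may first minimize each conditional $\phi_k(\cdot\mid\st)$ pointwise in $\st$ and only afterward minimize over the marginal $\spmf_k$. This is valid because the simplex weights $\spmf_k(\st)$ are nonnegative and the $\st$-indexed blocks do not interact, but it must be stated carefully, including the convention that a conditional carrying weight zero contributes nothing. I would also verify that each infimum is attained so that the passage from $\inf$ to the closed-form value is legitimate; this is immediate here because $\S$ and $\U$ are finite, whence the log-moment generating function in \Cref{lm:dualkullback} is finite and its optimizer is supported on $\{u:\phi_k^0(u\mid\st)>0\}$. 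The telescoping reindexing and the bookkeeping of which terms depend on $u$ versus on $\st$ alone are routine, but must be tracked exactly to recover \eqref{e:DualFormulaFinal}.
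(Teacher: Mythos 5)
Your proposal is correct and follows essentially the same route as the paper: complete the square in $\gamma$, decompose the Lagrangian across $k$ via the factorization $\nu_k(\st,u)=\spmf_k(\st)\phi_k(u\mid\st)$, apply the convex dual of relative entropy (\Cref{lm:dualkullback}) pointwise in $\st$ to the conditional, and finish with the linear minimization of $\lgl\spmf_k,\,g_k-\clT^\lambda_k(g_{k+1})\rgl$ over the simplex. The only cosmetic difference is that you make the telescoping reindexing of the dynamics term explicit, whereas the paper absorbs it directly into the rewritten Lagrangian \eqref{e:primalLagrangianA}.
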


\begin{proof}
First, make the substitution  $\nu_k(\st,u) = \spmf_k(s) \fee_k(u \mid \st)$,
so that the Lagrangian \eqref{e:Lagrangian} can be written
\begin{equation}
\begin{aligned}
\quad
& \clL(\nu, \gamma, \lambda , g) =
\sum_{n=1}^N \Bigl(\frac{\kappa}{2} \gamma_n^2 + \lambda_n \gamma_n + \lambda_n \har_n \Bigr) - \sum_{\st,u} \nu^0_0(\st, u) \sum_{\st'} T_0(x, \st') g_1(\st')
\\
& +
\sum_{k=1}^K \sum_{\st} \spmf_k(\st) \sum_u \fee_k(u \mid \st)  \Big(L_k(\st,u) 
			- \sum_{\st'} T_k(x, \st') g_{k+1}(\st') - \chlambda_k \outpt(\st,u) \Bigr)
\\
& +
\sum_{k=1}^K \sum_{\st} \spmf_k(\st) g_k(s)
\end{aligned}
\label{e:primalLagrangianA}
\end{equation}
with $g_{K+1} \equiv 0$,   and $L_k(\st,u) = 
\log \frac{\fee_k(u \mid \st)}{\fee^0_k(u \mid \st)}
$.
This amounts to a Lagrangian  decomposition since the minimization of the Lagrangian is equivalent to solving $K$ separate convex programs to obtain each of the minimizers $\{\nu_k^{\lambda,g}: \nu_k^{\lambda,g}(\st,u) = \spmf^{\lambda,g}_k(\st) \fee^{\lambda,g}_k(u \mid \st), \, (\st,u) \in \state, \, 1 \le k \le K\}$.   That is, $ \displaystyle\argmin_\fee \clL = $
\begin{equation}
\bigg\{\argmin_{\fee_k :1\leq k\leq K } \sum_{u}  \fee_k(u \mid \st) \bigg[ L_k(\st,u)  - \sum_{\st'} T_k(x, \st') g_{k+1}(\st') - \chlambda_k \outpt(\st,u) \bigg]  \bigg\} 
\end{equation}
It follows from \Cref{lm:dualkullback} that the minimizer is given by
\begin{equation}
\begin{aligned} 
\fee_k^{\lambda,g} (u\mid \st) &= 
\fee_k^0 (u\mid \st) \exp\bigl( \sum_{\st'} T_k(x, \st') g_{k+1}(\st') + \chlambda_k \outpt(\st,u) - \Lambda_k(\st) \bigr)   
\\
\textit{with} \qquad
\Lambda_k(\st) &= \clT^\lambda_k(g_{k+1};\st)
\end{aligned}
\label{e:phi_lemma}
\end{equation}

\Cref{lm:dualkullback} also gives the value:
\begin{equation*}
\begin{aligned} 
  \min_{\fee_k}
\sum_{u}  \fee_k(u \mid \st)  & \Bigl[  L_k(\st,u)  
					- \sum_{\st'} T_k(x, \st') g_{k+1}(\st') - \chlambda_k \outpt(\st,u) \Bigr] = - \clT^\lambda_k(g_{k+1};\st)
\end{aligned} 
\end{equation*}
resulting in
\begin{equation}
\begin{aligned} 
\min_{\nu}
\clL(\nu, \gamma,\lambda, g) =& \sum_{n=1}^N \Bigl(\frac{\kappa}{2} \gamma_n^2 + \lambda_n \gamma_n + \lambda_n \har_n \Bigr) - \sum_{\st,u} \nu^0_0(\st,u) \sum_{\st'}T_0(x, \st')g_1(\st')
\\ &+
\sum_{k=1}^{K} \min_{\spmf_k} \lgl \spmf_k, g_k - \clT^\lambda_k(g_{k+1}) \rgl
\end{aligned}
\label{e:dual_lambda_h}
\end{equation}
Next, observe that the minimizer $\spmf^{\lambda,g}_k$ is obtained when the support of each $\spmf_k$ satisfies
$$ \text{supp} \big(\spmf_k(s) \big) \subseteq \argmin_\st \Big[g_k(\st) - \clT^\lambda_k(g_{k+1};s) \Big]
$$
so that
\begin{equation*}
\min_\st \Big[g_k(\st) - \clT^\lambda_k(g_{k+1};\st) \Big] = \lgl \spmf^{\lambda,g}_k, g_k - \clT^\lambda_k(g_{k+1}) \rgl
\end{equation*}
We also have $\gamma^\lambda_n = - \frac{1}{\kappa}\lambda_n$
Substituting the minimizers $\{\nu^{\lambda,g}_k, \gamma^\lambda_n\}$ into \eqref{e:dual_lambda_h}, and applying \eqref{e:Gamma}, results in \eqref{e:DualFormulaFinal}.
\end{proof}

\section{Duality}
\label{app:B}

\begin{lemma}\label{lm:dual_lambda}	
The maximum of the dual function over $g$ is 
\begin{equation}
\varphi^\star(\lambda) \eqdef \max_g \varphi^\star(\lambda,g) = \lambda^T \har
-\frac{1}{2\kappa} \|\lambda\|^2 
-  \lgl \nu_0^0, \Tg^\lambda_1 \rgl
\label{e:dual_lambda}
\end{equation}
with $ \Tg^\lambda_1 (x) = \sum_{\st'} T_0(x, \st') g^\lambda_1(\st')$.
A maximizer $g^\lambda$ is given by the recursive formula:
\begin{equation}
g^\lambda_k = \clT^\lambda_k(g_{k+1}^\lambda) \,, \;\; 1 \le k \le K \,, \;\; \text{\it where} \,\, g^\lambda_{K+1} \equiv 0 \,,
\label{e:g_lambda}
\end{equation}
\qed 
\end{lemma}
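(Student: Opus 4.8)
The plan is to begin from the closed form of the dual functional established in \Cref{lm:dual_lambda_h}, namely \eqref{e:DualFormulaFinal}, and to carry out the maximization over $g=\{g_k\}$ directly. Since $\lambda^T\har-\frac{1}{2\kappa}\|\lambda\|^2$ does not involve $g$, it suffices to maximize, over all $g$ with $g_{K+1}\equiv 0$ held fixed,
\begin{equation*}
F(g) \eqdef -\lgl \nu_0, \Tg_1 \rgl + \sum_{k=1}^{K}\min_\st\bigl[g_k(\st)-\clT^\lambda_k(g_{k+1};\st)\bigr],
\qquad \Tg_1(x_0)=\sum_\st T_{u_0}(\st_0,\st)\,g_1(\st),
\end{equation*}
and to show the maximum equals $-\lgl\nu_0,\Tg^\lambda_1\rgl$, attained by the recursion \eqref{e:g_lambda}. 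The essential subtlety to keep track of is that $g_1$ enters $F$ both through $\lgl\nu_0,\Tg_1\rgl$ and through the $k=1$ summand, so the maximization does not decouple across $k$ in an obvious way.

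First I would record the two structural properties of $\clT^\lambda_k$ that drive the argument. It is \emph{monotone}: if $f\le f'$ pointwise then $\clT^\lambda_k(f)\le\clT^\lambda_k(f')$, because $T_u(\st,\cdot)$ and $\phi^0_k(\cdot\mid\st)$ are nonnegative and $\log$ and $\exp$ are increasing. It is also \emph{shift-covariant}: $\clT^\lambda_k(f+c\One)=\clT^\lambda_k(f)+c$ for any constant $c$, because each row of $T_u$ sums to one. The linear functional $g_1\mapsto\lgl\nu_0,\Tg_1\rgl$ shares both properties, for the same reasons ($\nu_0$ a pmf and $\One$ invariant under the stochastic $T_u$). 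Substituting the recursion $g^\lambda_k=\clT^\lambda_k(g^\lambda_{k+1})$ makes every summand vanish, since $\min_\st[g^\lambda_k(\st)-\clT^\lambda_k(g^\lambda_{k+1};\st)]=\min_\st 0=0$; hence $F(g^\lambda)=-\lgl\nu_0,\Tg^\lambda_1\rgl$, which supplies the candidate value and proves attainment (the minima exist because $\S$ is finite).

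The remaining step, and the only real obstacle, is the matching upper bound $F(g)\le-\lgl\nu_0,\Tg^\lambda_1\rgl$ for arbitrary $g$: raising $g_1$ increases the summands but decreases $-\lgl\nu_0,\Tg_1\rgl$, so the two must be balanced. I would resolve this by a backward induction that propagates the slacks $m_k\eqdef\min_\st[g_k(\st)-\clT^\lambda_k(g_{k+1};\st)]$, with partial sums $c_k=\sum_{j\ge k}m_j$ and $c_{K+1}=0$. Writing $g_k(\st)=\clT^\lambda_k(g_{k+1};\st)+\delta_k(\st)$ with $\delta_k\ge m_k$, the base case gives $g_K\ge g^\lambda_K+m_K\One$; and if $g_{k+1}\ge g^\lambda_{k+1}+c_{k+1}\One$, then monotonicity and shift-covariance yield $\clT^\lambda_k(g_{k+1})\ge g^\lambda_k+c_{k+1}\One$, whence $g_k\ge g^\lambda_k+c_k\One$. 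Iterating to $k=1$ gives $g_1\ge g^\lambda_1+(\sum_{k=1}^K m_k)\One$ pointwise. Applying the monotone, shift-covariant functional $g_1\mapsto\lgl\nu_0,\Tg_1\rgl$ then gives $\lgl\nu_0,\Tg_1\rgl\ge\lgl\nu_0,\Tg^\lambda_1\rgl+\sum_k m_k$, so that
\begin{equation*}
F(g)=-\lgl\nu_0,\Tg_1\rgl+\sum_{k=1}^K m_k\le-\lgl\nu_0,\Tg^\lambda_1\rgl=F(g^\lambda).
\end{equation*}
This establishes both the formula \eqref{e:dual_lambda} and the optimality of the recursion \eqref{e:g_lambda}.
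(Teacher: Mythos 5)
Your proposal is correct and follows essentially the same route as the paper: both start from the closed form of $\varphi^*(\lambda,g)$ in \Cref{lm:dual_lambda_h} and exploit monotonicity of $\clT^\lambda_k$ together with invariance under adding constants to the $g_k$. The only difference is presentational — the paper normalizes the slacks to zero ``without loss of generality'' and then argues the constrained minimum over $g_1$ is attained at the recursive lower bound, whereas you keep the slacks explicit and propagate their cumulative sum through a backward induction, which makes the matching upper bound $F(g)\le F(g^\lambda)$ fully explicit.
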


\begin{proof}[Proof of \Cref{lm:dual_lambda}]
Adding a constant to any of the $(g_1, g_2, \dots, g_K)$ does not change the value of $\clL$ \eqref{e:Lagrangian} or $\varphi^\star$ \eqref{e:dualAlambda}, so without loss of generality we assume,
\begin{equation}
\min_\st \Big[ g_k(\st) - \clT^\lambda_k(g_{k+1};\st) \Big] = 0 \;\; \text{\it for each k,}
\label{e:mins}
\end{equation}
and consequently
\begin{equation}
g_k \ge \clT^\lambda_k(g_{k+1}) \;\; \text{\it for each k} \,.
\label{e:ineq}
\end{equation}
Thus, in view of \eqref{e:DualFormulaFinal},
\begin{equation}
\varphi^\star(\lambda) = \lambda^T \har
-\frac{1}{2\kappa} \|\lambda\|^2 
- \min_{g_1} \sum_{\st,u} \nu^0_0(\st,u) \sum_{\st'}T_0(x, \st')g_1(\st') \,,
\label{e:varphi*pf}
\end{equation}
where the minimum is subject to the constraint \eqref{e:ineq}. Next, observe that $\clT^\lambda_k$ is a monotone operator, so that for each $k\le K$,
$$
g_k \ge \clT^\lambda_k \circ \clT^\lambda_{k+1} \circ \dots \circ \clT^\lambda_K(g_{K+1}) \doteq g^\lambda_k \,, \quad \text{\it where} \,\, g_{K+1} \equiv 0
$$
Based on the expression \eqref{e:varphi*pf}, we now show that the maximum $\argmax_g \fee^\star(\lambda,g)$ is obtained by choosing each $g_k$ to reach this lower bound, giving \eqref{e:g_lambda}. Indeed, $g^\lambda_1$   achieves the minimum in \eqref{e:varphi*pf},  since $g^\lambda_1 \le g_1$ for any $g_1$ for which \eqref{e:ineq} holds. This result along with \eqref{e:mins} yields \eqref{e:dual_lambda}.
\end{proof}

\begin{lemma}\label{lm:g_bound}	
The maximizers $\{g^\lambda_k\}$ have at most linear growth in $\|\lambda\|$:
\begin{equation}
|g^\lambda_k(s)| \leq C_k \|\lambda\| \,, \quad 1 \le k \le K
\label{e:g_bound}
\end{equation}
where $C_k = \|\outpt\|_\infty \sum_{i=k}^{K} \|w(i)\|$ and $w(i)$ is the vector $\{w_1(i), w_2(i), \dots, w_N(i)\}$.
\qed 
\end{lemma}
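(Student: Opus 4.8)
The plan is to exploit the fact that each $\clT^\lambda_k$ is a ``log-sum-exp'' (soft-max) operator taken against the probability vector $\phi^0_k(\,\cdot\mid\st)$, so that its value is sandwiched between the minimum and maximum of its exponent. First I would isolate the elementary inequality that for any probability vector $(\pi_u)$ and any reals $(a_u)$,
\[
\min_u a_u \;\le\; \log \sum_u \pi_u e^{a_u} \;\le\; \max_u a_u \, ,
\]
the upper bound coming from $\sum_u \pi_u e^{a_u}\le e^{\max_u a_u}$ and the lower bound from $\sum_u \pi_u e^{a_u}\ge e^{\min_u a_u}$ (using $\sum_u \pi_u =1$). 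Applying this with $\pi_u = \phi^0_k(u\mid\st)$ and $a_u = \sum_{\st'}T_u(\st,\st')f(\st') + \chlambda_k \outpt(\st,u)$ reduces the task to bounding the exponent.

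Next I would bound that exponent using stochasticity. Since each row $T_u(\st,\,\cdot\,)$ is a probability vector, $\bigl|\sum_{\st'}T_u(\st,\st')f(\st')\bigr|\le\|f\|_\infty$, while $|\chlambda_k \outpt(\st,u)|\le |\chlambda_k|\,\|\outpt\|_\infty$. Combining these with the soft-max sandwich yields the one-step growth estimate
\[
\|\clT^\lambda_k(f)\|_\infty \;\le\; \|f\|_\infty + |\chlambda_k|\,\|\outpt\|_\infty \, .
\]
The same two structural properties used here, namely monotonicity and invariance under adding a constant to $f$, show that $\clT^\lambda_k$ is in fact non-expansive in $\|\cdot\|_\infty$; this is the underlying reason for the estimate, although only the displayed bound is needed.

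I would then unroll the recursion \eqref{e:g_lambda} backward from $g^\lambda_{K+1}\equiv 0$. Applying the one-step bound inductively and telescoping gives
\[
\|g^\lambda_k\|_\infty \;\le\; \|\outpt\|_\infty \sum_{i=k}^{K} |\chlambda_i| \, .
\]
Finally, Cauchy--Schwarz applied to $\chlambda_i = \sum_{n=1}^N \lambda_n w_n(i)$ gives $|\chlambda_i|\le \|\lambda\|\,\|w(i)\|$, and substituting produces $\|g^\lambda_k\|_\infty \le C_k \|\lambda\|$ with $C_k = \|\outpt\|_\infty \sum_{i=k}^{K}\|w(i)\|$, as claimed.

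I do not anticipate a genuine obstacle here; the only point requiring care is the soft-max sandwich. Specifically, one must remember that the lower bound relies on $\sum_u \phi^0_k(u\mid\st)=1$ and that the stochasticity of $T_u$ (so $\sum_{\st'}T_u(\st,\st')=1$) is exactly what confines the $f$-dependent part of the exponent to $\pm\|f\|_\infty$. With those two facts the estimate is mechanical, and the linear-in-$\|\lambda\|$ growth follows immediately from Cauchy--Schwarz.
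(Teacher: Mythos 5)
Your proposal is correct and follows essentially the same route as the paper's proof: the paper performs the identical backward induction from $g^\lambda_{K+1}\equiv 0$, bounding the log-sum-exp above and below via the stochasticity of $\phi^0_k(\cdot\mid\st)$ and $T_u(\st,\cdot)$, and using $|\chlambda_i|\le\|\lambda\|\,\|w(i)\|$. Your packaging of the induction step as a single non-expansiveness-plus-drift estimate $\|\clT^\lambda_k(f)\|_\infty\le\|f\|_\infty+|\chlambda_k|\,\|\outpt\|_\infty$ is just a cleaner presentation of the same argument.
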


\begin{proof}[Proof of \Cref{lm:g_bound}]
The proof is by induction, starting with   $k=K$:
\begin{equation}
\begin{aligned}
\bigl|
g^\lambda_K(s)
\bigr| 
  &=
\bigl|
 \clT^\lambda_K(g_{K+1};\st)
 \bigr| 
\\ &=
\Big|
\log\Bigl( \sum_u \fee_K^0(u\mid \st) \exp\bigl(\chlambda_K \outpt(\st,u) \bigr) \Bigr)
\Big|
\\&\leq
\Big|
\log\Bigl( \sum_u \fee_K^0(u\mid \st) \exp\bigl(|\chlambda_K| \|\outpt\|_\infty \bigr) \Bigr)
\Big|
\\&=
\big|
\log\Bigl( \exp\bigl(|\chlambda_K| \|\outpt\|_\infty \bigr) \Bigr)
\big|
\leq
 C_K \|\lambda\|
\eqdef
 \|w(K)\| \|\outpt\|_\infty \|\lambda\|  \,,
\end{aligned}
\end{equation}
which establishes the induction hypothesis for $K$. Now, assume the hypothesis is true for $k \le K$. Then,
\begin{equation*}
\begin{aligned}
\big|
g^\lambda_{k-1}(s)
\big|
 &=
 \big|
 \clT^\lambda_{k-1}(g_k;\st)
 \big|
\\ &=
\Big|
\log\Bigl(\sum_u \fee_{k-1}^0(u\mid \st) \exp\bigl(\sum_{\st'}T_{k-1}(x, \st')g_k(\st') + \chlambda_{k-1} \outpt(\st,u) \bigr) \Bigr)
\Big|
\\&\leq
\Big|
\log\Bigl( \sum_u \fee_{k-1}^0(u\mid \st) \exp\bigl(\sum_{\st'}T_{k-1}(x, \st') C_k \|\lambda\| + |\chlambda_{k-1}| \|\outpt\|_\infty \bigr) \Bigr)
\Big|
\\&\leq
C_{k-1} \|\lambda\|
\eqdef
\bigl[
 C_k+   \|w(k-1)\| \|\outpt\|_\infty  \bigr]  \|\lambda\| 
 \end{aligned}
\end{equation*}
This completes the proof of \eqref{e:g_bound} by induction.
\end{proof}

\begin{proof}[Proof of \Cref{t:dualFunctional}]
	We prove the existence of a maximizer $\lambda^\star$ by showing that $\fee^\star(\lambda)$ is an anti-coercive function, i.e., $\fee^\star(\lambda) \rightarrow -\infty$ as $\|\lambda\| \rightarrow \infty$. 
	By \Cref{lm:g_bound}, there exists $C_1<\infty$ such that
	\begin{equation*}
	\begin{aligned}
	\varphi^\star(\lambda) &= \lambda^T \har
	-\frac{1}{2\kappa} \|\lambda\|^2 - \sum_{\st,u} \nu^0_0(\st,u) \sum_{\st'}T_0(x, \st')g^\lambda_1(\st')
	\\ & \leq
	\|\lambda\| \|\har\| - \frac{1}{2\kappa}\|\lambda\|^2 +   \max_{ \st' } |g^\lambda_1(\st')|
	\\ & \leq
	\|\lambda\| \|\har\| - \frac{1}{2\kappa}\|\lambda\|^2 + C_1 \|\lambda\|
	\end{aligned}
	\end{equation*}
	Since $\fee^\star(\lambda)$ is upper-bounded by an anti-coercive function, $\fee^\star(\lambda)$ itself is an anti-coercive function.
	Thus a maximizer $\lambda^\star$ exists,   and $(\lambda^\star, g^\star) = (\lambda^\star, g^{\lambda^\star})$ by \eqref{e:g_lambda}.

	
	The primal is a convex program, as established in \Cref{p:convexity}. To show that there is no duality gap it is sufficient that Slater's condition holds \cite[Section~5.3.2]{boyvan04a}. This condition holds: the relative interior of the constraint-set for the primal is non-empty since it contains $\{\nu_k^0\}$. Optimality of \eqref{e:phi-h-Lambda} is established by substituting $g_{k+1}^\star$ into \eqref{e:phi_lemma} and by making the substitution $g^\star_k = \clT^\lambda_k(g^\star_{k+1})$ implied by \eqref{e:g_lambda}.
\end{proof}

\begin{proof}[Proof of \Cref{p:algo}] This proof has three parts:
	\begin{romannum}
	\item \Cref{e:g_lambda2} is proven by \Cref{lm:dual_lambda}.
	
	\item \Cref{e:dualAlambda} is proven by \Cref{lm:dual_lambda}.
	
	\item The representation of the derivative in part (iii) is standard (e.g., Section~5.6 of \cite{boyvan04a}), but we provide the proof for completeness. The representation \eqref{e:Lagrangian} implies that $\varphi^\star$ is concave in $(\lambda, g)$, since it is the infimum of linear functions. This representation also gives a formula for a derivative:
	$$
	\frac{\partial}{\partial\lambda_n}    \varphi^\star(\lambda,g) = \har_n - \frac{1}{\kappa}  \lambda_n - \sum_{k=1}^{K} w_n(k) \lgl  \nu_k^{\lambda,g} , \outpt \rgl \,, \qquad 1\le n\le N
	$$
	where $\nu_k^{\lambda,g} $ is any optimizer in \eqref{e:dual_lambda_h}.  Using  $\varphi^\star(\lambda) = \varphi^\star(\lambda,g^\lambda)$ then gives
	$$
	\frac{\partial}{\partial\lambda_n} \varphi^\star(\lambda) = \har_n - \frac{1}{\kappa}  \lambda_n - \sum_{k=1}^{K} w_n(k) \lgl \nu_k^{\lambda}, \outpt \rgl + \frac{\partial}{\partial g} \varphi^\star\, (\lambda,g^\lambda) \cdot 	\frac{\partial}{\partial\lambda_n} g^\lambda
	$$
	The first order condition for optimality gives $\frac{\partial}{\partial g} \varphi^\star\, (\lambda,g^\lambda) = 0$, which completes the proof of the representation. It is evident that $ \varphi^\star$ is continuously differentiable since $\nu^{\lambda}_k $ is continuously differentiable for each $k$ by construction.
	\end{romannum}
\end{proof}

\begin{proof}[Proof of \Cref{c:optLLR}] This proof has three parts:
	\begin{romannum}
	\item Application of \eqref{e:LLR} and \eqref{e:phi-h-Lambda} results in the log-likelihood ratio:
	\[
	\begin{aligned}
	L(\vx) &= \sum_{k=1}^{K} \Big( \sum_{\st}T_k(x, \st)g^\lambda_{k+1}(\st) + \chlambda_k \outpt(x_k) - g^\lambda_k(\st_k) \Big)
	\\ &=
	 \sum_{k=1}^{K} \Big( \Tg^\lambda_{k+1}(x_k) + \chlambda_k \outpt(x_k) - g^\lambda_k(\st_k) \Big) 
	\end{aligned} 
	\]
	where the second identity follows from the definition  \eqref{e:Gamma}.   We have from the definitions,
	$\Tg^\lambda_{K+1}   \equiv 0$, which results in 
	\[
	L(\vx) =
	-
	\Tg^\lambda_{1}(x_0) +
	 \sum_{k=1}^{K} \Big( \Tg^\lambda_{k}(x_{k-1}) + \chlambda_k \outpt(x_k) - g^\lambda_k(\st_k) \Big) 
	\]
	This combined with \eqref{e:delta} yields \eqref{e:llr}.
	
	\item Applying the definition of relative entropy as the mean log-likelihood, and noticing that $\Expect_{p^\lambda} \bigl[ \Delta_{k}(X_{k-1},S_{k}) \bigr] = 0$ for $1 \le\ k \le K$, results in
	\begin{equation*}
	\begin{aligned}
	\sum_{\vx} p^\lambda(\vx) L(\vx)   &=
	\sum_{\vx} p^\lambda(\vx) \Bigg( \sum_{k=1}^{K} \chlambda_k \outpt(x_k) - \Tg^\lambda_1(x_0) \Bigg)
	\\ &=
	\sum_{k=1}^{K} \sum_{x_k} \sum_{x_i, i \neq k} p^\lambda(\vx) \chlambda_k \outpt(x_k) - \sum_{x_0} \sum_{x_i, i \neq 0} p^\lambda(\vx) \Tg^\lambda_1(x_0)
	\\ &=
	\sum_{k=1}^{K} \chlambda_k \lgl \nu^\lambda_k, \outpt \rgl - \lgl \nu_0^0, \Tg^\lambda_1 \rgl
	\end{aligned}
	\end{equation*}
	
	\item Substitution of \eqref{e:rel_ent} into \eqref{e:primal3} results in \eqref{e:primal4}.
	\end{romannum}
\end{proof}

\section{IPD-Q} 
\label{app:IPDQ}

 \begin{proof}[Proof of \Cref{t:LQRbLemma}] 
 An application of the implicit function theorem   tells us that $\{r^\lambda, \fee^\lambda :  \lambda\in \Re\}$ are smooth as functions of $\lambda$, whose derivatives may be expressed
 \[
 \begin{aligned}
\frac{d}{d\lambda}  r^\lambda &=    \frac{d}{d\lambda} \lgl \pi^\lambda, \outpt \rgl +  1/\kappa
				=    \varsigma^2_\lambda +  1/\kappa
   \\
    \frac{d}{d\lambda}  \log\bigl(\fee^\lambda(u\mid \st)   \bigr)      & = \barH^\lambda(x) + \outpt(x)  -   \frac{d}{d\lambda}  \Gamma^\lambda(\st)  
\end{aligned} 
 \]
The first identities follow from \eqref{e:rCS} and then \eqref{e:PoissonElementsY}.   The formula for the derivative of 
$ \log(\fee^\lambda)$ is immediate from \eqref{e:phistarIPDQ}.

The proof of \eqref{e:tilfee} requires approximations for $r^\lambda$ and $\fee^\lambda$ in a neighborhood of zero.
The first approximation is gven by
 $ r^\lambda  =   \bigl(  \varsigma^2_0 + 1/ \kappa \bigr)\lambda + O(\lambda^2)$.
The definition \eqref{e:Gamma-lambda} implies that 
\[
 \begin{aligned}
    \frac{d}{d\lambda}  \Gamma^\lambda(\st)  \big|_{\lambda=0}  &=   \barH^0_\st + \outpt_\st \,,
\\[.5em]
\textit{which  gives,
}\quad \log\bigl(\fee^\lambda(u\mid \st)   \bigr) & =  \log\bigl(\fee^0(u\mid \st)   \bigr)  + \Uplambda(x) \lambda + O(\lambda^2)
\end{aligned}  
 \]

An inversion is applied to express $\lambda$ as a function of $r$, giving
 \[
  \fee (u\mid s,r)  =    \fee^0 (u\mid \st)\exp\bigl(  \Uplambda(x) \lambda_r \bigr)   + O(r^2)
 \]
 with $\lambda_r =\bigl(  \varsigma^2_0 + 1/ \kappa \bigr)^{-1} r  +O(r^2)$.     Hence \eqref{e:tilfee} follows from  a first order Taylor series approximation of the exponential.
 \end{proof}

\begin{proof}[Proof of \Cref{t:IPDQ-LQRb}]
The proof of \Cref{t:IPDQ-LQR} is adopted, with one significant change: the Taylor series approximation is avoided,  and instead the bilinear structure is used,  $\nu_{k+1} = \nu_k P_k $,  in which  $P_k = P_{\fee_k}$.    On adding, subtracting, and rearranging terms, 
\[
\nu_{k+1} =  
\nu_k P^0     +   \pi^0  [P_k -P^0]     
+
[ \nu_k -\pi^0]  [P_k -P^0]   
\]
 Using invariance of $\pi^0$, and the definition $\tilnu_k =  \nu_k -\pi^0$, 
   gives the error recursion,
\[
\tilnu_{k+1} =     \tilnu_k P^0     +   \pi^0  [P_k -P^0]     
+
 \tilnu_k    [P_k -P^0]   
\]
This is identical to \eqref{e:mfmXr} through notation.  In particular, the quadratic term $ \tilnu_k    [P_k -P^0]   $ evaluated at $x^i$ is precisely $\epsy_k^i$.
Similarly, 
 \[
  \pi^0  [P_k -P^0] (x^i)  =   \sum_x  \pi^0(x) T(x, \st^i)  \tilfee_k(u^i\mid \st^i)  =  \spmf^0(\st^i)  \tilfee_k(u^i\mid \st^i)   \,,
 \]
and recall that $ \spmf^0(\st^i) $ is the $i$th diagonal element of $B$.
The right hand side is approximated using \Cref{t:LQRbLemma} to complete the proof.
 \end{proof}

\bibliographystyle{siamplain}

\def\cprime{$'$}\def\cprime{$'$}


\end{document}